\documentclass{amsart}
\usepackage{amsmath,amssymb,amsthm}
\usepackage[backend=biber,
            style=alphabetic,
            url=false
            ]{biblatex}
\usepackage[utf8]{inputenc}

\usepackage{hyperref}

\usepackage[capitalize]{cleveref}
\usepackage{todonotes}
\usepackage{refcount}

\addbibresource{mybib.bib}
\theoremstyle{plain}
\newtheorem{theorem}{Theorem}[section]
\newtheorem{proposition}[theorem]{Proposition}
\newtheorem{lemma}[theorem]{Lemma}
\newtheorem{cor}[theorem]{Corollary}
\newtheorem{claim}{Claim}[theorem]

\theoremstyle{definition}
\newtheorem{definition}{Definition}

\theoremstyle{remark}


\newcommand{\catspb}{CatSpec_{\approx}}
\newcommand{\A}{\mathcal{A}}
\newcommand{\B}{\mathcal{B}}
\newcommand{\ra}{\rightarrow}
\newcommand{\LR}{\Leftrightarrow}
\newcommand{\mrm}[1]{\mathrm{#1}}
\newcommand{\ol}[1]{\overline{#1}}
\renewcommand{\phi}{\varphi}
\DeclareMathOperator{\biemb}{\approx}

\begin{document}

\title{Degrees of bi-embeddable categoricity}

\author[N. Bazhenov]{Nikolay Bazhenov}
\author[E. Fokina]{Ekaterina Fokina}
\author[D. Rossegger]{Dino Rossegger}
\author[L. San Mauro]{Luca San Mauro}

\address{Sobolev Institute of Mathematics, 4 Acad. Koptyug Ave., Novosibirsk, 630090, Russia; and
Novosibirsk State University, 2 Pirogova St., Novosibirsk, 630090, Russia
}
\email{bazhenov@math.nsc.ru}

\address{Institute of Discrete Mathematics and Geometry, Vienna University of Technology,
Wiedner Hauptstra{\ss}e 8--10/104, 1040 Vienna, Austria
}

\email{ekaterina.fokina@tuwien.ac.at}
\email{dino.rossegger@tuwien.ac.at}
\email{luca.san.mauro@tuwien.ac.at}

\thanks{N. Bazhenov was supported by Russian Science Foundation, project No.~18-11-00028.
D.\ Rossegger was supported by RFBR, project no.~17-31-50026 mol\_nr.
E.\ Fokina was supported by the Austrian science fund FWF, project P~27527.  L.\ San Mauro was supported by the Austrian science fund FWF, projects P~27527 and M~2461.
}

\maketitle

\begin{abstract}
We investigate the complexity of embeddings between bi-embed\-dable structures. In analogy with categoricity spectra, we define the bi-embeddable categoricity spectrum of a structure $\A$ as the family of Turing degrees that compute embeddings between any computable bi-embeddable copies of $\A$; the degree of bi-embeddable categoricity of $\A$ is the least degree in this spectrum (if it exists). We extend many known results about categoricity spectra to the case of bi-embeddability. In particular, we exhibit structures without degree of bi-embeddable categoricity, and we show that every degree d.c.e\ above $\mathbf{0}^{(\alpha)}$ for $\alpha$ a computable successor ordinal and $\mathbf{0}^{(\lambda)}$ for $\lambda$ a computable limit ordinal is a degree of bi-embeddable categoricity. We also give examples of families of degrees that are not bi-embeddable categoricity spectra.
\end{abstract}
\section{Introduction}
Two mathematical structures are considered the same if they are isomorphic. While this classification is valid for structural properties of structures, for computational properties it is too coarse. Indeed, two isomorphic structures can have very different computational properties. Even two  isomorphic computable structures may have different algorithmic properties. Fr\"ohlich and Shepherdson~\cite{frohlich1956}, and independently, Malt'sev~\cite{maltsev1962} discovered that there are isomorphic computable structures which behave differently computationally in the sense that in one structure an additional relation is computable while in the other one it is not. They concluded that there can not be a computable isomorphism between these structures, since any two computably isomorphic structures must have the same algorithmic properties. Since this discovery, the study of the complexity of isomorphisms between computable structures has been one of the main themes of effective mathematics and computable structure theory in particular. One of the main goals in the area is to find connections between the structural properties of structures, as defined by their isomorphism types, and the computational properties its isomorphic copies might possess.

In this article we extend this study to investigate the complexity of embeddings between bi-embeddable structures. Two structures $\A$ and $\B$ are \emph{bi-embeddable} (notation: $\A\approx\B$) if there is an embedding of either in the other. The bi-embeddability relation has attracted a lot of attention of specialists in computable structure theory and descriptive set theory in recent years (see, e.g, \cite{louveau2005,friedman2011}). Montalb\'an~\cite{montalban2005} showed that every hyperarithmetic linear ordering is bi-embeddable with a computable one, and in~\cite{greenberg2008}, together with Greenberg, they showed that the same is true for abelian $p$-groups, Boolean algebras, and compact metric spaces. In~\cite{fokina2018a}, Fokina, Rossegger, and San Mauro, observed that every equivalence structure is bi-embeddable with a computable one. These results show that in many natural classes of structures, one of the main notions one usually uses to measure the complexity of a structure, its degree spectrum\footnote{The degree spectrum of a structure is the family of Turing degrees of its isomorphic copies. This notion is easily generalized to work with bi-embeddability by considering the family of Turing degrees of the bi-embeddable copies of a structure.}, fails to properly capture the desired computational content. This motivates the systematic study of the complexity of embeddings between bi-embeddable structures.
We develop this in analogy to the study of the complexity of isomorphisms between computable structures. Our main notion is the following.
%
\begin{definition}\label{def:001}
	Let $\mathbf{d}$ be a Turing degree. We say that a computable structure $\mathcal{S}$ is \emph{$\mathbf{d}$-computably bi-embeddably categorical} (or \emph{$\mathbf{d}$-computably b.e.\ categorical}, for short) if for any computable structure $\mathcal{A}\approx \mathcal{S}$, there are $\mathbf{d}$-computable isomorphic embeddings $f\colon \mathcal{A} \hookrightarrow \mathcal{S}$ and $g\colon \mathcal{S} \hookrightarrow \mathcal{A}$. The \emph{bi-embeddable categoricity spectrum} of $\mathcal{S}$ is the set
		\[
		\catspb(\mathcal{S}) = \{ \mathbf{d}\,\colon \mathcal{S} \text{ is } \mathbf{d}\text{-computably bi-embeddably categorical}\}.
		\]
	A degree $\mathbf{c}$ is the \emph{degree of bi-embeddable categoricity} of $\mathcal{S}$ if $\mathbf{c}$ is the least degree in the spectrum $\catspb(\mathcal{S})$.
\end{definition}
Notice that the bi-embeddable categoricity spectrum of a structure does not necessarily have a degree of bi-embeddable categoricity. We study such structures in \cref{sec:nodeg}.

Definition~\ref{def:001} is similar to the notions of categoricity spectrum and degree of categoricity which were introduced in~\cite{fokina2010}. The \emph{categoricity spectrum} of a computable structure $\mathcal{S}$ is the set of all Turing degrees which are capable of computing isomorphisms among arbitrary computable isomorphic copies of $\mathcal{S}$. The \emph{degree of categoricity} of $\mathcal{S}$ is the least degree from the categoricity spectrum of $\mathcal{S}$. In recent years, researchers have been extensively investigated what classes of Turing degrees can be categoricity spectra  \cite{fokina2010,csima2013,MS-15,bazhenov2017c} and what can not \cite{anderson2016,FS-14,FT-18}. In the present paper we will discuss in detail to which extent such results can be transferred to the realm of bi-embeddability.

\cref{def:001} already appeared in our article~\cite{bazhenov2018a}, where 	 we gave a complete characterization of the degrees of bi-embeddable categoricity of equivalence structures by showing that these are either $\mathbf{0}$, $\mathbf{0}'$, or $\mathbf{0}''$. In this article we focus on general results, especially, on the question which Turing degrees can and can not be degrees of categoricity. Some of the results of the paper were announced in~\cite{bazhenov2018b}.
%
\section{Examples of bi-embeddable categoricity spectra}
We now give several examples of bi-embeddable categoricity spectra. In particular we exhibit structures without degree of bi-embeddable categoricity and show that every degree d.c.e\ above $\mathbf{0}^{(\alpha)}$ for $\alpha$ a computable successor ordinal and $\mathbf{0}^{(\lambda)}$ for $\lambda$ a computable limit ordinal is a degree of bi-embeddable categoricity. Our examples have in common that they are only bi-embeddable with their isomorphic copies. We call such structures b.e.\ trivial. More formally, a structure $\mathcal S$ is \emph{b.e.\ trivial} if
\[ Iso(\mathcal S):= \{ \mathcal A : \mathcal A\cong \mathcal S\}=\{ \mathcal A: \mathcal A\biemb \mathcal S\}=:BiEmb(\mathcal S).\]
B.e.\ triviality has been thoroughly studied in the context of degree spectra in~\cite{fokina2018a}. For b.e.\ trivial structures there is a strong connection between computable categoricity and computable bi-embeddable categoricity. In particular, if a b.e.\ trivial structure is $\mathbf d$-computably categorical for some degree $\mathbf{d}$, then it is $\mathbf d$-computably bi-embeddably categorical.

The main results in this section are stated in \cref{theo:degrees,theo:limitdegree}. Their proofs follow the ideas of the proofs of similar theorems for degrees of categoricity given in~\cite{csima2013}. The key ingredient of the proofs is jump inversion using pairs of structures. Csima, Franklin, and Shore used back-and-forth trees to accomplish the jump inversion. These trees have the downside that they are not b.e.\ trivial. We therefore jump invert using pairs of well-orderings. This technique has recently been used in~\cite{CFG+,bazhenov2017c,rossegger2018}. In what follows we fix a path $\mathcal P$ through Kleene's $\mathcal O$ and identify computable ordinals with their notations on this path. We will not distinguish between the notation of an ordinal on $\mathcal P$ and the ordinal itself. However, there should not arise any confusion as what we mean should be clear from the context.
\begin{theorem}\label{theo:degrees}
	Let $\alpha$ be a computable successor ordinal. Suppose that $\mathbf{d}$ is a Turing degree such that $\mathbf{d}$ is d.c.e.\ in $\mathbf{0}^{(\alpha)}$ and $\mathbf{d} \geq \mathbf{0}^{(\alpha)}$. There is a computable, bi-em\-bed\-da\-bly trivial structure $\mathcal{S}$ with degree of bi-embeddable categoricity $\mathbf{d}$.
\end{theorem}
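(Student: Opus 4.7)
The plan is to adapt the construction of Csima, Franklin, and Shore for degrees of categoricity, replacing their back-and-forth trees (which are not b.e.\ trivial) with pairs of well-orderings. The point of this substitution is that any two bi-embeddable well-orderings are automatically isomorphic, so a well-ordering $\mathcal{S}$ satisfies $Iso(\mathcal{S})=BiEmb(\mathcal{S})$ and every result about its categoricity spectrum transfers verbatim to its bi-embeddable categoricity spectrum.

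I would begin by writing $\alpha=\beta+1$ and fixing a set $D\in\mathbf{d}$ in the form $D = U\setminus V$ with $V\subseteq U$, where $U$ and $V$ are c.e.\ relative to $\mathbf{0}^{(\alpha)}$; this is the standard shape of a d.c.e.-in-$\mathbf{0}^{(\alpha)}$ set. The target structure $\mathcal{S}$ is then constructed as an ordinal sum $\sum_{n\in\omega}\mathcal{S}_{n}$ of \emph{gadgets}, each gadget being a computable well-ordering whose isomorphism type at level $\beta$ encodes a bit of information about $n$ with respect to $U$ and $V$. The key technical input is the jump-inversion technology of \cite{CFG+,bazhenov2017c,rossegger2018}, which provides, for any $\mathbf{0}^{(\beta)}$-c.e.\ coded datum, a uniformly computable family of pairwise non-isomorphic well-order gadgets whose isomorphism at the computable level demands precisely $\mathbf{0}^{(\alpha)}$ together with the coded datum. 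The enumerations of $U$ and $V$ relative to $\mathbf{0}^{(\alpha)}$ can be shifted down one jump by this inversion and threaded into the choice of each $\mathcal{S}_{n}$.

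For the upper bound, given an arbitrary computable $\mathcal{A}\cong\mathcal{S}$, I would show that the unique isomorphism $\mathcal{S}\to\mathcal{A}$ is $\mathbf{d}$-computable: the oracle $\mathbf{0}^{(\alpha)}\leq\mathbf{d}$ locates the block boundaries and the isomorphism type of each $\mathcal{S}_{n}$ up to level $\beta$, while the d.c.e.-in-$\mathbf{0}^{(\alpha)}$ predicate for $D$ resolves the remaining coded bit; a standard gadget-by-gadget back-and-forth then assembles the full embedding. For the lower bound, I would produce a particular computable copy $\mathcal{A}^{*}$ of $\mathcal{S}$ together with a decoding showing that any embedding $\mathcal{S}\hookrightarrow\mathcal{A}^{*}$ (equivalently, any isomorphism) reads off the pair $(U,V)$ and hence computes $D$. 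Since $\mathcal S$ is a well-ordering, this simultaneously gives the lower bound on $\catspb(\mathcal{S})$.

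The main obstacle I anticipate is ensuring that the gadgets remain \emph{well-orderings} while still being rich enough to realize exactly $\mathbf{d}$, rather than only some degree above $\mathbf{0}^{(\alpha)}$. The d.c.e.\ structure of $D$ must be respected when $n$ enters $V$ after having entered $U$: the commitment made at the earlier stage has to be reversible at level $\alpha$, and this reversibility is exactly what back-and-forth trees provide cheaply but well-orderings do not. Overcoming this is precisely where the pair-of-well-orderings technique from \cite{CFG+,bazhenov2017c,rossegger2018} is essential, and I would invoke it as a black box rather than redo the inversion from scratch.
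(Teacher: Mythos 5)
Your overall strategy (replace the Csima--Franklin--Shore back-and-forth trees by pairs of well-orderings) is the right one, but two of your concrete choices break the proof. First, $\mathcal{S}$ cannot be an ordinal sum $\sum_{n\in\omega}\mathcal{S}_n$ of well-order gadgets: if the gadgets are drawn from a pair such as $\{\omega^{\beta},\omega^{\beta}\cdot 2\}$, then $\sum_{n\in\omega}\mathcal{S}_n\cong\omega^{\beta}\cdot(k_0+k_1+\cdots)=\omega^{\beta+1}$ no matter which gadgets are chosen, so the isomorphism type of $\mathcal{S}$ retains no information about $U$ and $V$, and isomorphisms or embeddings between two such sums need not respect the gadget boundaries at all --- your ``decoding'' has nothing to read. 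Relatedly, your parenthetical ``(equivalently, any isomorphism)'' is false for well-orders: $\omega$ embeds into itself non-surjectively, and for bi-embeddable categoricity the lower bound must control \emph{all} embeddings, which is strictly harder than controlling the unique isomorphism; so ``every result about the categoricity spectrum transfers verbatim'' is not true in the direction you need. The paper avoids both problems by not building a linear order at all: the structures carry disjoint unary predicates $P_e$, an equivalence relation splitting each $P_e$-part into two classes, and a predicate $T$ splitting each class into a labelled \emph{pair} of well-orders from $\{(\omega^{\beta},\omega^{\beta}),(\omega^{\beta},\omega^{\beta}\cdot 2),(\omega^{\beta}\cdot 2,\omega^{\beta}\cdot 2)\}$ (resp.\ the $\omega^{\beta+1}$, $\omega^{\beta+1}+\omega^{\beta}$ versions when $\alpha=2\beta+2$). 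The predicates force every embedding to respect the components, which simultaneously yields b.e.\ triviality and a sound decoding, while the Ash--Knight pairs-of-structures theorem and the switching lemma (\cref{lem:switching}) handle the jump inversion and the $\mathbf{d}$-computable upper bound.

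Second, and independently, your lower bound cannot close for an arbitrary representation $D=U\setminus V$. What an embedding $f\colon\A\hookrightarrow\B$ actually reveals in such constructions is one-sided: one gets an equivalence of the form $m\notin U\setminus V \Leftrightarrow (f(a_{2m})\sim\hat a_{2m}) \text{ or } (m\in V)$, which makes $\overline{D}$ c.e.\ in $f\oplus\mathbf{0}^{(\alpha)}$ but does not compute $D$ for a general d.c.e.-in-$\mathbf{0}^{(\alpha)}$ set. The argument needs the relativized form of Theorem~3.1 of Fokina--Kalimullin--Miller: one must first choose $D\in\mathbf{d}$ with the property that for every oracle $X$, if $\overline{D}$ is c.e.\ in $X$ then $D\leq_T X\oplus\mathbf{0}^{(\alpha)}$. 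That choice is the engine of the whole proof and is absent from your proposal; without it you obtain only $\mathbf{0}^{(\alpha)}\leq_T f$ together with an enumeration of $\overline{D}$, not $\mathbf{d}\leq_T f$, so you cannot conclude that $\mathbf{d}$ is the \emph{least} degree in the spectrum.
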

\begin{theorem}\label{theo:limitdegree}
	Let $\alpha$ be a computable limit ordinal. There is a computable, bi-embeddably trivial structure $\mathcal{S}$ with degree of bi-embeddable categoricity $\mathbf 0^{(\alpha)}$.
\end{theorem}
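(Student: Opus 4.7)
The plan is to follow the same general strategy as in the proof of \cref{theo:degrees}: build $\mathcal{S}$ as a well-ordering, which is automatically b.e.\ trivial (since any bi-embedding between two well-orderings is an isomorphism, so $BiEmb(\mathcal{S}) = Iso(\mathcal{S})$), and then show that its classical categoricity spectrum — which then coincides with $\catspb(\mathcal{S})$ — equals the cone above $\mathbf{0}^{(\alpha)}$. The difference from \cref{theo:degrees} is that at a limit level we cannot exploit any d.c.e.\ slack above $\mathbf{0}^{(\alpha)}$, so the degree must land exactly on $\mathbf{0}^{(\alpha)}$. Fix a computable fundamental sequence $(\alpha_n)_{n \in \omega}$ of successor ordinals with $\sup_n \alpha_n = \alpha$, all notations lying on the path $\mathcal{P}$ through $\mathcal{O}$.

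Concretely, I would take $\mathcal{S}$ to be the ordinal sum $\sum_{n \in \omega} \mathcal{W}_n$, where $\mathcal{W}_n$ is a computable well-ordering chosen uniformly in $n$ so that the pair-of-well-orderings construction from \cite{CFG+,bazhenov2017c,rossegger2018} applies at level $\alpha_n$ and yields a well-ordering whose categoricity degree is exactly $\mathbf{0}^{(\alpha_n)}$. Since ordinal sums of well-orderings are well-orderings, $\mathcal{S}$ is again a well-ordering and hence b.e.\ trivial. The upper bound is then immediate: the $n$-th block $\mathcal{W}_n$ of any computable copy of $\mathcal{S}$ can be identified by $\mathbf{0}^{(\alpha)}$ (which knows all the back-and-forth relations at every level below $\alpha$), and on each block an isomorphism is computable from $\mathbf{0}^{(\alpha_n)} \leq \mathbf{0}^{(\alpha)}$ by the properties of the underlying gadget; patching these together yields a $\mathbf{0}^{(\alpha)}$-computable embedding.

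For the lower bound, suppose $\mathbf{d} \in \catspb(\mathcal{S})$. For each $n$, I would run the jump-inversion-by-pairs-of-well-orderings argument on the $n$-th block: construct a second computable copy $\mathcal{A}$ of $\mathcal{S}$ that agrees with the first copy on every block $\mathcal{W}_m$, $m \neq n$, but whose $n$-th block is an alternative computable presentation of $\mathcal{W}_n$ chosen so that any embedding between the two presentations of $\mathcal{W}_n$ computes $\mathbf{0}^{(\alpha_n)}$. Because the blocks are forced to be initial segments of specific ordinal length, any embedding between the two copies of $\mathcal{S}$ restricts to an isomorphism on the $n$-th block, and consequently $\mathbf{d} \geq \mathbf{0}^{(\alpha_n)}$. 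Since $n$ was arbitrary, $\mathbf{d} \geq \sup_n \mathbf{0}^{(\alpha_n)} = \mathbf{0}^{(\alpha)}$.

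The main obstacle I anticipate is obtaining genuine uniformity in $n$: the successor-case construction of \cite{CFG+,rossegger2018} has to be implemented as a computable function of the parameter $n$ so that $\mathcal{S}$ remains a single computable structure, and I must ensure that the ordinal-sum decomposition of $\mathcal{S}$ is sufficiently rigid, i.e., that the block boundaries are preserved by every embedding between computable copies. The latter should follow by choosing the $\mathcal{W}_n$ so that their ordinal ranks strictly increase (so the Cantor normal form of the sum pins down the block decomposition uniquely); the former is a matter of inspecting the cited constructions and verifying that they relativize cleanly to the parameter $n$ along the path $\mathcal{P}$.
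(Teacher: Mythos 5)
Your high-level plan (a b.e.\ trivial structure assembled from level-$\alpha_n$ gadgets along a fundamental sequence) is in the spirit of the paper's proof, but your lower bound contains a fatal error. You argue that $\mathbf{d}\geq \mathbf{0}^{(\alpha_n)}$ for every $n$ and conclude $\mathbf{d}\geq \sup_n \mathbf{0}^{(\alpha_n)}=\mathbf{0}^{(\alpha)}$. This supremum does not exist in the Turing degrees, and the set of upper bounds of the sequence $(\mathbf{0}^{(\alpha_n)})_{n\in\omega}$ is strictly larger than the cone above $\mathbf{0}^{(\alpha)}$: by the exact pair theorem (or the Enderton--Putnam/Sacks results on minimal upper bounds of ascending sequences) there are degrees above every $\mathbf{0}^{(\alpha_n)}$ that do not compute $\emptyset^{(\alpha)}$. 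Computing $\emptyset^{(\alpha)}\equiv_T\{\langle i,n\rangle : n\in\emptyset^{(\phi(i))}\}$ requires deciding membership in the $\emptyset^{(\phi(i))}$ \emph{uniformly} in $i$, whereas your construction produces, for each $n$ separately, a different adversarial copy $\mathcal{A}$ and hence only a non-uniform family of lower bounds. The paper handles exactly this point: it builds a single pair of copies in which one fixed embedding $f$ answers all queries $\langle i,n\rangle$ at once, via the single bit of whether $f(a_{\langle i,n\rangle})$ lands in the class of $\hat a_{\langle i,n\rangle}$ or of $\hat b_{\langle i,n\rangle}$, so that $f\geq_T\emptyset^{\phi}$.

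A second, independent gap is your rigidity claim for the ordinal sum. Well-orders are indeed b.e.\ trivial, but a self-embedding of a well-order need not be the identity (only $f(x)\geq x$ holds), so an embedding between two computable copies of $\sum_n\mathcal{W}_n$ need not respect the block decomposition: with ordinal ranks strictly increasing, the $n$-th block can be mapped entirely into a later block, and within a block the embedding need not be onto, so the usual trick of transferring an intrinsically $\Sigma^0_{2\beta}$-complete relation (such as $\sim_\beta$) along the map breaks down. Since $\mathbf{d}$-computable b.e.\ categoricity only supplies embeddings, not isomorphisms, you cannot assume the restriction to a block is an isomorphism. This is precisely why the paper does not use a bare well-order: it wraps the pairs of well-orders inside unary predicates $P_e$ (which every embedding must preserve) and inside two $\sim$-classes per $P_e$ chosen so that one class does not embed into the other, making the coding robust under arbitrary embeddings.
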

Before we give the proofs of the above theorems we need to recall some preliminaries. We assume that the reader is familiar with computable infinitary logic. If they are not, we suggest Ash and Knight~\cite{ash2000} as reference. Recall that a family of computable infinitary formulas $\Psi$ is a \emph{formally $\Sigma^0_\alpha$ Scott family} for a structure $\A$ with parameters $\ol c$ if
\begin{enumerate}
  \item $\Psi$ is c.e.,
  \item every formula in $\Psi$ is computable $\Sigma_\alpha$,
  \item for every $\ol a\in A^{<\omega}$ there is a unique formula $\phi_{\ol a}\in \Psi$ such that $(\A,\ol a)\models \phi_{\ol a}(\ol a,\ol c)$,
  \item and for $\ol a,\ol b\in A^{<\omega}$ of the same length if $\phi_{\ol a}=\phi_{\ol b}$, then there is an automorphism of $\A$ taking $\ol a$ to $\ol b$.
\end{enumerate}
In other words, $\Psi$ is a c.e.\ family of computable $\Sigma_\alpha$ formulas defining the automorphism orbits of $\A$. It follows from a classical result by Ash, Knight, Manasse, and Slaman~\cite{ash1989} that structures having formally c.e. $\Sigma^0_\alpha$ Scott families are $\Delta^0_\alpha$ categorical. See~\cite{ash2000} for more background on this topic.

In order to prove \cref{theo:degrees,theo:limitdegree} we still need to establish some properties of the well-orderings we will use for jump inversion. In the case where $\alpha=2\beta+1$ we will use the ordinals $\omega^\beta$ and $\omega^\beta\cdot 2$ and in the case where $\alpha=2\beta+2$ we will use $\omega^{\beta+1}$ and $\omega^{\beta+1}+\omega^\beta$. For limit ordinals we will use the corresponding successor ordinals obtained from their fundamental sequences. The following will be central to our proofs.
\begin{lemma}[{\cite[Proposition 2]{bazhenov2017c}}]\label{lem:switching}
  Assume that $\alpha$ is a non-zero computable ordinal, and $n$ is a natural number. Suppose that $\mathcal M$ and $\mathcal N$ are computable structures, $\Psi$ is a formally $\Sigma^0_\alpha$ Scott family for $\mathcal M$ without parameters, $\Xi$ is a formally $\Sigma^0_\alpha$ Scott family for $\mathcal N$ with parameters $\ol c$. Assume that $\Psi\subset \Xi$ and that $\xi(\ol x)$ is a $\Sigma^c_{\alpha+n}$ formula such that $\mathcal M\models \neg \exists \ol x \xi(\ol x)$ and $\ol c$ is the unique tuple from $\mathcal N$ satisfying $\xi(\ol x)$. Then, given computable indices of computable structures $\mathcal A$ and $\mathcal B$ such that $\mathcal A\cong \mathcal B\cong \mathcal C\in \{\mathcal M,\mathcal N\}$, one can effectively determine a $\Delta^0_{\alpha+n}$ index for an isomorphism $F$ from $\mathcal A$ onto $\mathcal B$.
\end{lemma}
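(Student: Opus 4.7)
The plan is to decide, $\Delta^0_{\alpha+n}$-effectively, which of $\mathcal M, \mathcal N$ the given structures are isomorphic to, and then to run a back-and-forth construction using the appropriate Scott family to produce the isomorphism. The case distinction is furnished by $\xi$: since $\exists \ol x\, \xi(\ol x)$ is $\Sigma^c_{\alpha+n}$, false in $\mathcal M$, and true in $\mathcal N$, asking whether $\mathcal A$ satisfies this sentence is a $\Sigma^0_{\alpha+n}$ query in the index of $\mathcal A$, whose answer is determined by the isomorphism type $\mathcal C$. Since the two options are mutually exclusive, the case distinction can be carried out $\Delta^0_{\alpha+n}$-uniformly in the indices.

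In the case $\mathcal A \cong \mathcal B \cong \mathcal M$, I would run the standard back-and-forth using $\Psi$. By the Ash--Knight--Manasse--Slaman theorem, since $\Psi$ is a formally $\Sigma^0_\alpha$ Scott family without parameters, there is a procedure that, uniformly in the indices of $\mathcal A$ and $\mathcal B$ and using a $\Delta^0_\alpha$ oracle, outputs an isomorphism $F\colon \mathcal A \to \mathcal B$; such an $F$ is in particular $\Delta^0_{\alpha+n}$.

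In the case $\mathcal A \cong \mathcal B \cong \mathcal N$, I would first locate witnesses for $\xi$: by a $\Sigma^0_{\alpha+n}$ search in $\mathcal A$ and $\mathcal B$, find tuples $\ol a, \ol b$ with $\mathcal A \models \xi(\ol a)$ and $\mathcal B \models \xi(\ol b)$. By the uniqueness clause for $\xi$ in $\mathcal N$, any isomorphism from $\mathcal A$ to $\mathcal N$ must send $\ol a$ to $\ol c$, and likewise for $\ol b$; so $\Xi$ with $\ol a$ (resp.\ $\ol b$) playing the role of $\ol c$ is a formally $\Sigma^0_\alpha$ Scott family for $\mathcal A$ (resp.\ $\mathcal B$). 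A further application of Ash--Knight--Manasse--Slaman then produces an isomorphism $F \colon \mathcal A \to \mathcal B$ with $F(\ol a) = \ol b$ that is $\Delta^0_\alpha$ relative to $\ol a \oplus \ol b$, hence $\Delta^0_{\alpha+n}$.

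The main obstacle, I expect, is combining these two sub-procedures into a single effective procedure emitting one $\Delta^0_{\alpha+n}$ index, uniformly in the indices of $\mathcal A$ and $\mathcal B$. One launches both sub-procedures in parallel, consults the $\Delta^0_{\alpha+n}$ oracle to decide the case, and then commits to the appropriate output. The inclusion $\Psi \subset \Xi$ is used here to ensure that the partial approximations produced by the $\mathcal M$-procedure are compatible with those of the $\mathcal N$-procedure, so that the commitment step of the uniform combination is well-defined and the resulting index truly describes an isomorphism regardless of which branch ultimately wins.
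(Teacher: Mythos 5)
There is a genuine gap at the very first step, and it is the crux of the lemma. You propose to decide, with a $\Delta^0_{\alpha+n}$ oracle, which of $\mathcal M,\mathcal N$ the structures $\mathcal A,\mathcal B$ are isomorphic to, by asking whether $\mathcal A\models\exists\ol x\,\xi(\ol x)$. That question is $\Sigma^0_{\alpha+n}$, and a $\Sigma^0_{\alpha+n}$ question is only \emph{semi-decidable} relative to $\Delta^0_{\alpha+n}$: you can enumerate witnesses to $\xi$, but you cannot confirm that none exist. Your justification --- ``since the two options are mutually exclusive, the case distinction can be carried out $\Delta^0_{\alpha+n}$-uniformly'' --- conflates ``the two cases partition the valid inputs'' with ``the partition is $\Delta^0_{\alpha+n}$.'' You would need \emph{both} ``$\mathcal A\cong\mathcal N$'' \emph{and} ``$\mathcal A\cong\mathcal M$'' to be $\Sigma^0_{\alpha+n}$ on the set of valid indices; the hypotheses only give the first, and the second is genuinely $\Pi^0_{\alpha+n}$-complete in the intended applications. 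Indeed, in \cref{theo:degrees} the lemma is applied to the pairs $\omega^\beta$, $\omega^\beta\cdot 2$ (with $\alpha=2\beta$, $n=1$) precisely because a $\Sigma^0_{2\beta+1}$-complete set is coded by which of the two order types a given computable copy realizes; if $\Delta^0_{2\beta+1}$ could decide that, the whole jump-inversion construction would collapse. So the ``consult the oracle to decide the case, then commit'' step of your combination is impossible, and the single index you output cannot be obtained this way.

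The proof of the cited result (and the reason the lemma is called a ``switching'' lemma) avoids deciding the case: one runs a single back-and-forth construction that by default uses only the parameter-free formulas of $\Psi$, while in parallel \emph{enumerating} (not deciding) pairs of tuples $\ol a\in A$, $\ol b\in B$ satisfying $\xi$. If such witnesses ever appear --- which happens exactly when $\mathcal C=\mathcal N$ --- the construction switches to the full family $\Xi$, interpreting $\ol a$ and $\ol b$ as the parameters $\ol c$. The hypothesis $\Psi\subseteq\Xi$ is exactly what makes the switch safe: every commitment $\ol d\mapsto\ol e$ made before the switch was justified by a common formula of $\Psi$, which is also the unique $\Xi$-formula these tuples satisfy, so $(\ol a,\ol d)$ and $(\ol b,\ol e)$ lie in the same orbit of $\mathcal N$ and the partial map extends. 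Note that your own final paragraph gestures at this compatibility role of $\Psi\subseteq\Xi$ --- but under a ``decide first, then run the right procedure'' strategy that hypothesis would never be needed, which is itself a signal that the intended argument cannot begin by deciding the case. Your two case-specific subroutines (the AKMS back-and-forth over $\Psi$, and the parameter-hunting version over $\Xi$) are fine; what is missing is the interleaved construction that dispenses with the up-front case decision.
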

We will use the following relations on linear orderings.
\begin{definition} Let $\mathcal L$ be a linear ordering and $x,y\in L$. Then let
  \begin{enumerate}
    \item $x\sim_0 y$ if $x=y$,
    \item $x \sim_{1} y$ if $[x,y]$ or $[y,x]$ is finite,
    \item for $\alpha=\beta+1$, $x \sim_{\alpha} y$ if in $\mathcal L /{\sim_{\beta}}$, $[x]_{\sim_\beta} \sim_1 [y]_{\sim_\beta}$,
    \item for $\alpha=\lim \beta$, $x \sim_{\alpha} y $ if for some  $\beta < \alpha$, $x \sim_{\beta} y$.
  \end{enumerate}
  \noindent The relation $\sim_1$ is commonly known as the block relation. We refer to $\sim_\alpha$ as the $\alpha$-block relation.
\end{definition}
The $\alpha$-block relation is relatively intrinsically $\Sigma^0_{2\alpha}$. To see this first note that for $\alpha=\beta+1$, $\sim_{\alpha}$ is definable by
\[ x\sim_\alpha y \LR \bigvee_{n\in \omega} \forall y_1,\dots, y_n \left(x<y_1<\cdots<y_n<y \ra  \bigvee_{1\leq  i<j\leq n } y_i \sim_\beta y_j \right).\]
For $\lambda$ a limit ordinal, let $\phi$ be a fundamental sequence of $\lambda$ in $\mathcal P$. Then $\sim_\lambda$ is definable by
\[ x\sim_\lambda y \LR \bigvee_{i\in\omega} x\sim_{\phi(i)} y.\]
Using transfinite induction it is immediate from the definition that for each computable ordinal $\alpha$, $\sim_\alpha$ is definable by a computable $\Sigma_{2\alpha}$ formula and thus relatively intrinsically $\Sigma^0_{2\alpha}$.
We are now ready to show that our pairs $\omega^{\beta}$, $\omega^{\beta}\cdot 2$ and $\omega^{\beta+1}$, $\omega^{\beta+1}+\omega^{\beta}$ satisfy the conditions of \cref{lem:switching}. The lemmas follow from the proofs in~\cite{ash1986} (see also~\cite[Theorem 17.5]{ash2000}). We sketch the proofs for the sake of completeness.
\begin{lemma}\label{lem:oddordinalsscottfam}
  The ordering $\omega^\beta$ has a formally $\Sigma^0_{2\beta}$ Scott family $\Psi$ without parameters, and $\omega^\beta\cdot 2$ has a formally $\Sigma^0_{2\beta}$ Scott family with one parameter $c$ such that $\Psi\subseteq \Xi$ and there is a $\Sigma_{2\beta+1}$ formula $\xi(x)$ such that $\omega^{\beta}\cdot 2\models \xi(c)$ but no element of $\omega^{\beta}$ satisfies $\xi(x)$.
\end{lemma}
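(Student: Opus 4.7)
My plan is to prove both claims by transfinite induction on $\beta$, following the strategy of Ash's analysis of ordinals. Since both $\omega^\beta$ and $\omega^\beta \cdot 2$ are rigid, every automorphism orbit is a singleton, so producing a Scott family amounts to producing, for each finite tuple, a formula of the requisite complexity that defines that tuple uniquely. For $\Psi$, when $\beta = 1$ each natural number $n$ is defined by the $\Sigma^0_2$ formula ``$x$ has exactly $n$ predecessors''. At a successor step $\beta = \gamma+1$ I view $\omega^\beta = \omega^\gamma \cdot \omega$: each element decomposes into its $\sim_\gamma$-block index (a natural number) and its position within the block (an element of $\omega^\gamma$). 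The block index is described using the $\Sigma^0_{2\gamma}$ relation $\sim_\gamma$ exactly as in the base case, while the within-block position is described by the inductive $\Sigma^0_{2\gamma}$ formula with all quantifiers relativized to the ambient $\sim_\gamma$-block, yielding total complexity $\Sigma^0_{2\gamma+2} = \Sigma^0_{2\beta}$. For limit $\beta$ with fundamental sequence $\{\gamma_i\}$, each $a < \omega^\beta$ lies in $\omega^{\gamma_i}$ for some $i$, and I take $\phi_{\bar a}^{\Psi}$ to be the inductive formula at that stage, relativized to the $\sim_{\gamma_i}$-block of the minimum; this has complexity $\Sigma^0_{2\gamma_i} \subseteq \Sigma^0_{2\beta}$. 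The formula for a tuple is the conjunction of the formulas for the individual coordinates together with atomic ordering information.

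For $\Xi$ I take $c$ to be the least element of the second copy of $\omega^\beta$ in $\omega^\beta \cdot 2$, i.e., the element at ordinal position $\omega^\beta$. For any tuple $\bar a$ lying entirely below $c$, I set $\phi^{\Xi}_{\bar a} := \phi^{\Psi}_{\bar a}$: this formula continues to define $\bar a$ in $\omega^\beta \cdot 2$ because all of its quantifiers were already relativized to $\sim$-blocks living inside the first copy, and these blocks are unaffected by the presence of the second copy. This yields the literal inclusion $\Psi \subseteq \Xi$. For tuples meeting the second copy I define $\phi^{\Xi}_{\bar a}$ by relativizing the first-copy formulas to the upper set $\{y : y \geq c\}$: an element at ordinal position $\omega^\beta + \delta$ is described by $\phi^{\Psi}_{\delta}(y)$ with every quantifier bounded by $y \geq c$. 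Because $y \geq c$ is atomic once $c$ is a parameter, this relativization does not raise the complexity beyond $\Sigma^0_{2\beta}$.

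For the separating formula I take
\[
\xi(x) \;:=\; \exists y \bigl( y < x \wedge \neg (x \sim_\beta y) \bigr).
\]
Since $\sim_\beta$ is $\Sigma^0_{2\beta}$, its negation is $\Pi^0_{2\beta}$, and so $\xi$ is $\Sigma^0_{2\beta+1}$. A routine induction on $\beta$ shows that $\omega^\beta$ is a single $\sim_\beta$-class, whence no element of $\omega^\beta$ satisfies $\xi$; on the other hand in $\omega^\beta \cdot 2$ the element $c$ is not $\sim_\beta$-related to the minimum $0 < c$, so $\xi(c)$ holds. The main obstacle will be the bookkeeping needed to keep the complexity of the Scott-family formulas at exactly $\Sigma^0_{2\beta}$ through the successor and limit stages of the induction, and to arrange that the first-copy part of $\Xi$ literally (and not merely up to equivalence) coincides with $\Psi$, so that the required inclusion is syntactic. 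A secondary technical point is the careful relativization of quantifiers to $\sim_\gamma$-blocks at each inductive step; although this does not increase complexity, it must be performed uniformly so that the resulting family is c.e.
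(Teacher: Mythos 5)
Your overall strategy matches the paper's: both proofs exploit rigidity to reduce the problem to exhibiting a defining family, both take $c$ to be the first element of the second copy of $\omega^\beta$, and both obtain $\Psi\subseteq\Xi$ syntactically by describing first-copy elements via their initial-segment type and second-copy elements via the interval from $c$. The only structural difference is that the paper simply cites the classical Ash--Knight formulas $\theta_\gamma(x)$ (with $(\mathcal W,a)\models\theta_\gamma(a)$ iff $[0^{\mathcal W},a)\cong\gamma$, uniformly in \emph{all} well-orders), whereas you re-derive them by transfinite induction. That is acceptable, but you should state your inductive invariant in the Ash--Knight form (truth in an arbitrary well-order, not just in $\omega^\beta$), since you need exactly that absoluteness to conclude that a first-copy formula is not accidentally satisfied by a second-copy element of $\omega^\beta\cdot 2$; and when relativizing the second-copy formulas you must also assert $x\geq c$ of the \emph{free} variable, not only of the quantified ones, else (for instance) the formula intended for $c$ itself is vacuously satisfied by every element below $c$, destroying the defining-family property.

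The one substantive problem is your separating formula $\xi(x)=\exists y\,(y<x\wedge\neg(x\sim_\beta y))$. It does verify the lemma as literally stated: $\omega^\beta$ is a single $\sim_\beta$-class, so no element of $\omega^\beta$ satisfies $\xi$, while $c$ does. But in $\omega^\beta\cdot 2$ your $\xi$ is satisfied by \emph{every} element of the second copy, not only by $c$, whereas Lemma~\ref{lem:switching} --- the sole consumer of this lemma --- requires $c$ to be the \emph{unique} element of $\mathcal N$ satisfying $\xi$. As written, your $\xi$ therefore makes the lemma unusable for its intended application. The repair is immediate: conjoin the $\Pi_{2\beta}$ clause $\forall z<x\,(z\not\sim_\beta x)$, which pins down the $\leq$-least element of the second $\sim_\beta$-class, namely $c$; the conjunction is still $\Sigma_{2\beta+1}$ and is essentially the formula the paper uses.
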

\begin{proof}
  Since well-orderings are rigid it is sufficient to give a defining family, i.e., a family of formulas such that each element satisfies a formula in the family and not two formulas are satisfied by two elements. From this it is easy to obtain the Scott family of the ordering. We therefore give $\Psi$ and $\Xi$ as defining families instead. Towards this notice that for every non-zero ordinal $\gamma<\omega^\beta$ there is a computable $\Sigma_{2\beta}$ formula $\theta_\gamma(x)$ without parameters such that for any well-ordering $\mathcal W$
  \[  (\mathcal W, a) \models \theta_\gamma(a) \text{ iff } [0^{\mathcal W},a)\cong \gamma, \]
  where $0^{\mathcal W}$ is the first element of $\mathcal W$~\cite[Proposition 7.2]{ash2000}. Let $\Psi=\{ \theta_\gamma(x): \gamma<\omega^{\beta} \}$. Then it is not hard to see that this is a defining family for $\omega^{\beta}$. Let $c$ be the first element of the second copy of $\omega^{\beta}$ in $\omega^{\beta}\cdot 2$. Then $\Xi$ consists of all the formulas of $\Psi$ and formulas $\theta_\gamma(x,y)$ such that
  \[ (\mathcal W,c,a)\models \theta_\gamma(c,a) \text{ iff } [c,a)\cong \gamma.\]
  Clearly $\Xi$ is a $\Sigma_{2\beta}$ defining family for $\omega^{\beta}\cdot 2$ with one parameter. Furthermore the parameter $c$ is definable in $\omega^{\beta}\cdot 2$ by the $\Sigma_{2\beta+1}$ formula
  \[ \xi(x)=\exists y\leq x \land\forall z<x \ z\not \sim_\beta x.\]
\end{proof}
\begin{lemma}
  The ordering $\omega^{\beta+1}$ has a formally $\Sigma^0_{2\beta+2}$ Scott family $\Psi$ without parameters, and $\omega^{\beta+1}+\omega^{\beta}$ has a formally $\Sigma^0_{2\beta+2}$ Scott family with one parameter $c$ such that $\Psi\subseteq \Xi$ and there is a $\Pi_{2\beta+1}$ formula $\xi(x)$ such that $\omega^{\beta+1}+\omega^{\beta}\models \xi(c)$ but no element of $\omega^{\beta}$ satisfies $\xi(x)$.
\end{lemma}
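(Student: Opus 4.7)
The plan is to follow \cref{lem:oddordinalsscottfam} almost verbatim, moving one level up and replacing the $\Sigma$-formula that defined the parameter with a $\Pi$-formula. By Proposition~7.2 of~\cite{ash2000}, for each non-zero $\gamma<\omega^{\beta+1}$ there is a computable $\Sigma_{2\beta+2}$ formula $\theta_\gamma(x)$ without parameters such that, in any well-ordering $\mathcal W$, $(\mathcal W,a)\models\theta_\gamma(a)$ iff $[0^{\mathcal W},a)\cong\gamma$. I would set $\Psi=\{\theta_\gamma(x):0<\gamma<\omega^{\beta+1}\}$; since well-orderings are rigid, this is a defining family and hence yields a Scott family for $\omega^{\beta+1}$.

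I would then take $c$ to be the first element of the final $\omega^{\beta}$-summand of $\omega^{\beta+1}+\omega^{\beta}$, and form $\Xi$ by adjoining to $\Psi$ the two-variable $\Sigma_{2\beta}$ formulas $\theta_\gamma(x,y)$ asserting $[y,x)\cong\gamma$, for $\gamma<\omega^{\beta}$: elements $a<c$ are already described by $\Psi$ since $[0,a)$ is an initial segment of $\omega^{\beta+1}$, while elements $a\geq c$ are described by $\theta_\gamma(x,c)$ with $\gamma<\omega^{\beta}$. Trivially $\Psi\subseteq\Xi$, and $\Xi$ lives at level $\Sigma^0_{2\beta+2}$.

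The new ingredient is the formula $\xi$. The distinguishing property of $c$ in $\omega^{\beta+1}+\omega^{\beta}$ is that $c$ begins a $\sim_\beta$-block that has no $\sim_\beta$-successor, so I would take
\[\xi(x)\;:=\;(\forall z<x)(z\not\sim_\beta x)\;\wedge\;(\forall y>x)(y\sim_\beta x).\]
The first conjunct forces $x$ to start its $\sim_\beta$-block and the second forces that block to be a final segment. Since $\omega^{\beta+1}$ has $\omega$-many $\sim_\beta$-blocks and no last one, no element of $\omega^{\beta+1}$ satisfies $\xi$, while in $\omega^{\beta+1}+\omega^{\beta}$ precisely $c$ does. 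Using that $\sim_\beta$ is relatively intrinsically $\Sigma^0_{2\beta}$, the first conjunct is $\Pi_{2\beta}$ (absorbing the bounded $\forall z<x$) and the second is $\Pi_{2\beta+1}$, so $\xi$ is $\Pi_{2\beta+1}$, as required.

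The main thing to watch is the quantifier bookkeeping: one must confirm that Ash's two-variable formulas $\theta_\gamma(x,y)$ really sit at level $\Sigma_{2\beta}$ for $\gamma<\omega^{\beta}$, that $\Psi\subseteq\Xi$ holds syntactically, and that the $\Pi$-complexity of $\xi$ does not collapse in edge cases such as $\beta=0$, where $\xi(x)$ simply expresses ``$x$ is the maximum'' and is correctly $\Pi_1$.
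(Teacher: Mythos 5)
Your proposal is correct and follows essentially the same route as the paper: the Scott families are built exactly as in the odd case (initial-segment formulas for $\Psi$, plus interval-from-$c$ formulas for $\Xi$ with $c$ the first element of the final $\omega^{\beta}$-summand), and your formula $\xi(x)$ is, up to replacing $\forall y>x$ by $\forall z\geq x$, the same $\Pi_{2\beta+1}$ formula the paper uses. Your complexity bookkeeping and the observation that the lemma's ``no element of $\omega^{\beta}$'' should read $\omega^{\beta+1}$ are both sound.
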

\begin{proof}
  The construction of the Scott families $\Psi$ and $\Xi$ is analogous to the construction of $\Psi$ and $\Xi$ in \cref{lem:oddordinalsscottfam}. The parameter $c$ of $\Xi$ is the first element of the last copy of $\omega^\beta$ in $\omega^{\beta+1}+\omega^{\beta}$. It is definable by the $\Pi_{2\beta+1}$ formula
\[
	\xi(x) = \forall z \geq x ( z\sim_{\beta} x) \land \forall z < x (z \not\sim_{\beta} x).
\]

\end{proof}
We are now ready to prove \cref{theo:degrees}.
\begin{proof}[Proof of \cref{theo:degrees}]

We build two b.e.\ trivial computable structures $\A$ and $\B$ such that $\mathcal{A}\cong \mathcal{B}$, $\mathcal{A}$ is $\mathbf{d}$-computably categorical, and any embedding from $\mathcal{A}$ into $\mathcal{B}$ must compute $\mathbf{d}$. We first give the construction for the case when $\mathbf{d}$ is d.c.e.\ over $\mathbf{0}^{(2\beta+1)}$, where $\beta$ is an infinite ordinal. For finite ordinals the construction is the same except for a shift of indices by $1$.

Ash's characterization of the back-and-forth relations for linear orders and his pairs of structures theorem, see Chapters 11 and 16 in~\cite{ash2000}, tells us that for any $\Sigma^0_{2\beta+1}$ set $S$, there is a computable sequence $(C_e)_{e\in \omega}$ of linear orders such that
\begin{equation} \label{equ:01}
C_e\cong\begin{cases}
\omega^{\beta}\cdot 2 & \text{if~} e\in S,\\
\omega^\beta & \text{if~} e\not \in S.
\end{cases}
\end{equation}

A relativized version of the argument from~\cite[Theorem 3.1]{fokina2010} shows that one can choose a set $D\in\mathbf{d}$ such that $D$ is d.c.e. in $\mathbf{0}^{(2\beta+1)}$ and for any oracle $X$, we have:
\[
	(\overline{D} \text{~is c.e. in~} X)\ \Rightarrow\ D\leq_T X\oplus \mathbf{0}^{(2\beta+1)}.
\]

As $D$ is d.c.e.\ above $\mathbf{0}^{(2\beta+1)}$ we have that $D=U\setminus V$ for $U$ and $V$ c.e.\ in $\mathbf{0}^{(2\beta+1)}$ and we may assume that $V\subset U$.
The language of our structures contains an equivalence relation $\sim$, a partial order $\leq$, a unary predicate $T$, and a unary predicate $P_e$, for each $e\in \omega$.  We first describe the construction of $\A$. For every $e$, we choose elements $a_e$ and $b_e$ in $\A$, and for every $P_e$, we let $P_e(A)$ be infinite and include $a_e$, $b_e$.

For a fixed $e$, we give the construction for the substructure on $P_e(A)$. We let $P_e(A)$ consist of two infinite equivalence classes (with respect to $\sim$) such that $a_e\not \sim b_e$.
The two classes $[a_e]$ and $[b_e]$ will both contain pairs of linear orders, i.e., structures of the form $(L_1,L_2)$ where $L_1$ and $L_2$ are linear orders (with respect to $\leq$), any $x\in L_1$ and $y\in L_2$ are incomparable, and $T([a_e])=L_1$.

If $e=2m$, then we encode the information whether or not $m$ is an element of $D$ in $P_e(A)$.  There are three cases:
\begin{enumerate}
  \item $m\not \in U$: we build $T([a_e]),\neg T([a_e]),T([b_e])\cong \omega^{\beta}$, and $\neg T([b_e])\cong \omega^{\beta}\cdot 2$;
  \item $m\in U\setminus V$: we build $T([b_e])\cong \omega^\beta$ and $T([a_e]),\neg T([a_e]),\neg T([b_e])\cong \omega^{\beta}\cdot 2$;
  \item $m\in V$: we build $T([a_e]),T([b_e]),\neg T([a_e]),\neg T([b_e])\cong \omega^{\beta}\cdot 2$.
\end{enumerate}
Analyzing this construction, we see that
\[ [a_e]\cong \begin{cases}
(\omega^{\beta}\cdot 2, \omega^{\beta}\cdot 2) &\text{if~} m\in U,\\
  (\omega^\beta,\omega^\beta) & \text{if~} m\not \in U,
\end{cases}
\quad \text{and}\quad
[b_e]\cong \begin{cases}
(\omega^{\beta}\cdot 2,\omega^{\beta}\cdot 2)& \text{if~} m\in V, \\
 (\omega^{\beta},\omega^{\beta}\cdot 2) & \text{if~} m\not \in V.
\end{cases}
\]

If $e=2m+1$, then we let $[b_e]\cong (\omega^{\beta},\omega^{\beta}\cdot 2)$, and for $[a_e]$ we let
\[ [a_e]\cong \begin{cases}
(\omega^{\beta}\cdot 2, \omega^{\beta}\cdot 2) & \text{if~} m\in \emptyset^{(2\beta+1)},\\
(\omega^{\beta},\omega^{\beta}) & \text{if~} m\not \in \emptyset^{(2\beta+1)}.
\end{cases}\]
The existence of the uniformly computable sequence of structures $(C_e)_{e\in \omega}$ from~(\ref{equ:01}) implies that we can do the construction computably.

For $\B$, we again choose elements $\hat a_e$, $\hat b_e$ for every $e$, and for $e=2m$ we build $\mathcal{B}$ like $\A$ with the difference that the roles of $\hat a_e$ and $\hat b_e$ are switched. For $e=2m+1$ we let
\[
	[\hat a_e]=\begin{cases}
	(\omega^\beta \cdot 2, \omega^\beta\cdot 2) & \text{if~} m\in \emptyset^{(2\beta+1)},\\
	(\omega^\beta,\omega^\beta\cdot 2) &\text{if~} m\not\in \emptyset^{(2\beta+1)},
	\end{cases}
	\quad \text{and}\quad
	[\hat b_e]=\begin{cases}
	(\omega^\beta,\omega^\beta\cdot 2) & m\in\emptyset^{(2\beta+1)},\\
	(\omega^\beta,\omega^\beta) & m\not\in\emptyset^{(2\beta+1)}.
	\end{cases}
\]
Clearly, $\B$ and $\A$ are isomorphic and computable. It is not hard to show that they are b.e.\ trivial: Indeed, every embedding of $\A$ into a bi-embeddable copy $\hat \A$ must map elements in $P_e(A)$ to elements in $P_e(\hat A)$, for every $e\in \omega$. Every $P_e(\hat A)$ must have exactly two equivalence classes as otherwise $P_e(\hat A)\not \approx P_e(A)$. Moreover, the pairs of structures that we use are pairs of well-orders, and thus b.e.\ trivial.

\begin{claim}
	The structure $\mathcal A$ is $\mathbf d$-computably categorical.
\end{claim}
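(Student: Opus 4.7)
The plan is to show that, uniformly in a computable index for any $\mathcal A'\cong\mathcal A$, the oracle $\mathbf d$ computes an isomorphism $F\colon\mathcal A'\to\mathcal A$. The structure $\mathcal A$ decomposes as $\bigsqcup_{e} P_e(\mathcal A)$; each $P_e(\mathcal A)$ consists of exactly two infinite $\sim$-classes, and each such class splits further, via $T$, into its $T$-part and its $\neg T$-part, both of which are computable copies of $\omega^{\beta}$ or $\omega^{\beta}\cdot 2$. Since $\mathcal A'\cong\mathcal A$, the same intrinsic decomposition is present in $\mathcal A'$, and each piece of it is a uniformly computable substructure of $\mathcal A'$.

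First I would perform a classification step $\mathbf d$-effectively. For each $e$, each $\sim$-class $C\subseteq P_e(\mathcal A')$, and each of $T(C)$, $\neg T(C)$, decide which of $\omega^{\beta}$ or $\omega^{\beta}\cdot 2$ the piece is isomorphic to. The $\Sigma^c_{2\beta+1}$ formula $\xi(x)$ from \cref{lem:oddordinalsscottfam} separates the two isomorphism types: asking whether the piece satisfies $\exists x\,\xi(x)$ is a $\Sigma^0_{2\beta+1}$ question, and because we know a priori that exactly one of the two types must hold, the decision is actually $\Delta^0_{2\beta+1}$, and hence computable from $\mathbf 0^{(2\beta)}\le \mathbf d$. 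Perform the same classification on $\mathcal A$.

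Next, match the pieces. For each $e$, pair the two $\sim$-classes of $P_e(\mathcal A')$ with the two $\sim$-classes of $P_e(\mathcal A)$ so that matched classes have identical classifications on both their $T$- and their $\neg T$-parts. A valid matching exists because $\mathcal A'\cong\mathcal A$; when the two classes in $P_e(\mathcal A)$ have distinct types the matching is forced, and otherwise any pairing works. All of this is done $\mathbf d$-uniformly in $e$.

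Finally, invoke \cref{lem:switching} with $\alpha=2\beta$, $n=1$, $\mathcal M=\omega^{\beta}$, $\mathcal N=\omega^{\beta}\cdot 2$, and $\Psi,\Xi,\xi$ supplied by \cref{lem:oddordinalsscottfam}, once on each matched pair of $T$-parts and once on each matched pair of $\neg T$-parts. Each invocation effectively produces a $\Delta^0_{2\beta+1}$ index for an isomorphism between matched parts, and their disjoint union over all $e$ and over the two matched classes in each $P_e$ is the desired global isomorphism $F\colon\mathcal A'\to\mathcal A$, whose index is still $\Delta^0_{2\beta+1}$ and hence $\mathbf d$-computable. The principal obstacle is the classification step, since once it is done $\mathbf d$-effectively and uniformly in $e$ and in the index of $\mathcal A'$, the matching is trivial and the rest is a routine assembly of the outputs of \cref{lem:switching}.
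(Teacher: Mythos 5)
Your overall strategy is sound and is close to the paper's: both arguments reduce the problem to matching the two $\sim$-classes inside each $P_e$ and then invoking \cref{lem:switching} (with the data from \cref{lem:oddordinalsscottfam}) on the resulting pairs of computable copies of $\omega^{\beta}$ or $\omega^{\beta}\cdot 2$. Where you differ is in how the matching is obtained. The paper never classifies the pieces outright: it uses the fact that $\mathbf d$ computes $D$, $U$, $V$ and $\emptyset^{(2\beta+1)}$ to determine \emph{in advance} which of the four construction cases holds for a given $e$, and then runs a $\mathbf d$-c.e.\ search for elements satisfying the relativized $\Sigma_{2\beta+1}$ formulas $\tilde\xi_e^{T}$, $\tilde\xi_e^{\neg T}$ --- a search that is guaranteed to terminate precisely because the case is already known, so only the positive ($\Sigma$) side of the classification is ever needed. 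You instead decide the classification of every piece directly, which costs one more quantifier but buys a cleaner, case-free argument that does not mention $D$, $U$, $V$ at all.

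There is, however, one genuinely wrong step in your justification. You claim that deciding whether a piece satisfies $\exists x\,\xi(x)$ is $\Delta^0_{2\beta+1}$ ``because we know a priori that exactly one of the two types must hold,'' and hence computable from $\mathbf 0^{(2\beta)}$. That inference is invalid: the disjointness-and-exhaustiveness argument only lowers complexity when \emph{both} alternatives are positively detectable at the same level, whereas here ``the piece is $\omega^{\beta}\cdot 2$'' is $\Sigma^0_{2\beta+1}$ but ``the piece is $\omega^{\beta}$'' is only $\Pi^0_{2\beta+1}$ (it asserts that \emph{no} element satisfies $\xi$). So the classification set is merely $\Sigma^0_{2\beta+1}$, i.e.\ $\Delta^0_{2\beta+2}$, and the correct statement is that it is computable from $\mathbf 0^{(2\beta+1)}$, not from $\mathbf 0^{(2\beta)}$. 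Your proof survives this correction only because the hypothesis $\mathbf d\geq\mathbf 0^{(2\beta+1)}$ still puts the classification (and hence the matching and the assembled isomorphism) below $\mathbf d$; likewise the final isomorphism is $\mathbf d$-computable but not, as you assert, $\Delta^0_{2\beta+1}$, since the matching data already sits at level $\Delta^0_{2\beta+2}$. You should restate the complexity bookkeeping accordingly.
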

\begin{proof}
	Let $\mathcal B$ and $\mathcal C$ be computable copies of $\mathcal A$. Clearly every isomorphism must map $P_e(B)$ to $P_e(C)$. Fix some $e\in \omega$.
	We produce a $\mathbf d$-computably partial isomorphism from $P_e(B)$ to $P_e(C)$. That there is a $\mathbf d$-computable isomorphism $\mathcal B\rightarrow\mathcal C$ will follow from the fact that our construction does not depend on the choice of $e$.

	Note that the formula $\xi(x)$ given in \cref{lem:oddordinalsscottfam} is computably $\Sigma_{2\beta+1}$ and that we can restrict the quantifiers in this formula to elements in $P_e(B) \cap T(B)$ (respectively $P_e(B)\cap \neg T(B)$), and to elements that are in the same $\sim$ equivalence class as $x$,  without changing its complexity. Let $\tilde \xi_e^T(x)$ and $\tilde \xi_e^{\neg T}(x)$ be the relativized formula. These formulas are also $\Sigma_{2\beta+1}$. It is thus c.e.\ in $\mathbf d$ to find an element $x\in B$ or $C$ of which these formulas hold.

	We distinguish the following cases.
	\begin{enumerate}
		\item $e=2m+1$ and $m\in \emptyset^{(2\beta+1)}$. Search for elements $b\in B$ and $c\in C$ that satisfy $\tilde\xi_e^T(x)$. By construction we will find such elements and $[b]\cong[c]\cong (\omega^\beta\cdot 2,\omega^\beta\cdot 2)$. Using \cref{lem:oddordinalsscottfam} and \cref{lem:switching} $\mathbf d$ can compute a partial isomorphism between $[b]$ and $[c]$. Now look for two elements $\hat b\in P_e(B)$ and $\hat c\in P_e(C)$ such that $\hat b\not \sim b$ and $\hat c\not \sim c$. Then $[\hat b]\cong [\hat c]\cong (\omega^{\beta},\omega^{\beta}\cdot 2)$ and again by \cref{lem:oddordinalsscottfam} and \cref{lem:switching} $\mathbf d$ can compute a partial isomorphism between $[\hat b]$ and $[\hat c]$. We thus get a partial isomorphism from $P_e(B)$ to $P_e(C)$.
		\item $e=2m+1$ and $m\not\in \emptyset^{(2\beta+1)}$. This case is similar to (1).
		\item $e=2m$ and $m\in D$. Then $m\in U\setminus V$ and the construction proceeds similarly to the two former cases.
		\item $e=2m$ and $m\not \in D$. Search for elements $b\in B$ and $c\in C$ satisfying $\tilde\xi_e^{\neg T}(x)$. By construction we will find such elements, get that $[b]\cong [c]$, and obtain that $\mathbf d$ can compute a partial isomorphism between $[b]$ and $[c]$. Then find elements $\hat b\in P_e(B)$ and $\hat c\in P_e(C)$ such that $b\not \sim \hat b$ and $c\not \sim \hat c$. These elements again exist by construction and from \cref{lem:switching} and \cref{lem:oddordinalsscottfam} we obtain a $\mathbf d$-computable partial isomorphism from $[\hat b]$ to $[\hat c]$.
	\end{enumerate}
\end{proof}
It remains to show that every embedding $f\colon\A \hookrightarrow \B$ computes $D$. We have that $f\geq_T \mathbf{0}^{(2\beta+1)}$ because
\[ m\in \emptyset^{(2\beta+1)}\LR f(a_{2m+1})\sim \hat b_{2m+1} \quad \text{and} \quad m\not\in\emptyset^{(2\beta+1)} \LR f(a_{2m+1})\sim \hat a_{2m+1}. \]
Similarly, we have that
\[  m \not\in U\setminus V \LR (f(a_{2m})\sim \hat a_{2m}) \text{~or~} (m\in V).
\]
Thus, $\overline{D}$ is c.e. in $f\oplus \mathbf{0}^{(2\beta+1)}$. Hence, $D \leq_T (f\oplus \mathbf{0}^{(2\beta+1)}) \equiv_T f$.

The construction for the case $\alpha=2\beta+2$ is nearly the same. The only difference is that in place of~(\ref{equ:01}), we use the following fact: For any $\Sigma^0_{2\beta+2}$ set $S$, there is a computable sequence $(C_e)_{e\in \omega}$ of linear orders such that
\[
C_e\cong\begin{cases}
\omega^{\beta+1}+\omega^{\beta} & \text{if~} e\in S,\\
\omega^{\beta+1} & \text{if~} e\not \in S.
\end{cases}
\]

\end{proof}
The main ideas behind the proof of \cref{theo:limitdegree} are similar to those used in the successor case. But we have to take into account that if $\alpha$ is a limit ordinal, then the definition of $\emptyset^{(\alpha)}$ is different from the successor case. We will use that $\emptyset^{(\alpha)}\equiv_T \{ \langle i,n\rangle\in\omega:n\in \emptyset^{(\phi(i))}\}$ where $\phi$ is a fundamental sequence for $\alpha$. To do this we will use pairs $(\omega^{\beta}\cdot 2, \omega^{\beta})$ not only for fixed $\beta$ but for infinitely many different $\beta$ below $\alpha$. Ash and Knight~\cite{ash1990a}, see also~\cite[Theorem 18.9]{ash2000}, proved a variation of the pairs of structure theorem which works for our purposes. We state it here in slightly different terminology.
\begin{lemma}\label{lem:uniformpairsofstructures}
  Let $(e_i, \alpha_i)_{i\in\omega}$ be a computable sequence of $\Pi^0_{\alpha_i}$ sets $P_{e_i}$ and let $(\A_i,\mathcal B_i)_{i\in\omega}$ be a sequence of structures such that $\mathcal B_i\leq_{\alpha_i} \A_i$ and $\{\A_i,\mathcal B_i\}$ is $\alpha_i$-friendly, uniformly in $i$. Then there is a uniformly computable sequence of structures $(\mathcal C_{\langle i,n\rangle})_{\langle i,n\rangle\in \omega}$ such that
  \[
    \mathcal C_{\langle i,n\rangle}\cong \begin{cases}
    \A_i & \text{if~} n\in P_{e_i},\\
    \mathcal{B}_i&\text{otherwise}.
    \end{cases}
  \]\end{lemma}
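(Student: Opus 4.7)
The plan is to execute a uniform version of the standard Ash--Knight pairs of structures theorem (\cite[Theorem~18.6]{ash2000}). First I would recall the non-uniform version: given a single pair $(\A,\B)$ with $\B\leq_\alpha \A$ and $\{\A,\B\}$ $\alpha$-friendly, together with a $\Pi^0_\alpha$ set $P$, each $\mathcal C_n$ is built by an $\alpha$-system that at every stage commits to extending the current finite fragment toward $\A$, hedged by back-and-forth moves of complexity appropriate to $\alpha$ that witness $\B\leq_\alpha \A$; whenever the $\Sigma^0_\alpha$ approximation indicates $n\notin P$, the system invokes those moves to redirect the construction toward $\B$. Correctness then reduces to the observation that if $n\in P$ the hedge is never triggered and $\mathcal C_n\cong\A$, while if $n\notin P$ the switch is eventually executed and the back-and-forth hierarchy guarantees $\mathcal C_n\cong\B$.

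Next I would observe that every ingredient of this construction is supplied uniformly in $i$ by the hypotheses. Uniform $\alpha_i$-friendliness of $\{\A_i,\B_i\}$ furnishes a uniformly c.e.\ family of back-and-forth approximations; the uniform witnessing of $\B_i\leq_{\alpha_i}\A_i$ yields uniform access to the switching moves; and computability of the sequence $(e_i,\alpha_i)$ gives a uniform $\Sigma^0_{\alpha_i}$ approximation to $\overline{P_{e_i}}$. Running the standard $\alpha_i$-system construction in parallel across all pairs $\langle i,n\rangle$ then produces a uniformly computable sequence $(\mathcal C_{\langle i,n\rangle})_{\langle i,n\rangle\in\omega}$ with the required isomorphism property.

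The main obstacle I anticipate is bookkeeping: since the ordinals $\alpha_i$ vary with $i$, the complexity of the back-and-forth relations and the shape of each $\alpha_i$-system depend on $i$, so there is no single $\alpha$ one can fix. This is handled by working along the fixed path $\mathcal P$ through Kleene's $\mathcal O$ and indexing everything by notations on $\mathcal P$; the operations on notations needed to run the construction (predecessor, fundamental sequence, effective transfinite recursion) are computable along $\mathcal P$, which lets the family of $\alpha_i$-systems be described by a single computable procedure in the parameter $i$. Once the uniform $\alpha_i$-system is in place, the isomorphism verification is a direct parameterization of the standard argument, and the conclusion follows.
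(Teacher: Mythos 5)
The paper does not actually prove \cref{lem:uniformpairsofstructures}: it is quoted (``in slightly different terminology'') from Ash and Knight~\cite{ash1990a}, see also \cite[Theorem 18.9]{ash2000}, which is already the uniform, varying-$\alpha_i$ version of the pairs of structures theorem. Your proposal is therefore not comparable to a proof in the paper, but as a reconstruction of the cited argument it is a faithful outline: you correctly identify the three ingredients the hypotheses supply uniformly in $i$ (the c.e.\ back-and-forth approximations coming from uniform $\alpha_i$-friendliness, the switching moves licensed by $\mathcal B_i\leq_{\alpha_i}\mathcal A_i$, and the uniform $\Sigma^0_{\alpha_i}$ approximation to $\overline{P_{e_i}}$ coming from the computable sequence $(e_i,\alpha_i)$), and you correctly locate the only genuinely new issue in the uniform setting, namely that the ordinals vary with $i$ and everything must be indexed by notations on the fixed path $\mathcal P$ through $\mathcal O$. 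What your sketch does not do is carry out the actual content: the definition of an $\alpha$-system, the metatheorem guaranteeing the existence of a run, and the verification that a switch executed at an arbitrary finite stage can still be completed to a copy of $\mathcal B_i$ are all invoked as ``the standard argument'' rather than proved. That is an acceptable level of detail here, since the paper itself discharges the entire lemma by citation; but if you intended this as a self-contained proof, the $\alpha$-system machinery (and in particular its uniform applicability when the level $\alpha_i$ is itself a parameter) would have to be spelled out or explicitly cited, exactly as the paper does.
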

\begin{proof}[Proof of \cref{theo:limitdegree}]
  As in the proof of \cref{theo:degrees} we will build two b.e.\ trivial computable structures $\A\cong\mathcal B$, such that $\mathcal A$ is $\mathbf 0^{(\alpha)}$-computably categorical and any embedding from $\mathcal A$ into $\mathcal B$ must compute $\mathbf 0^{(\alpha)}$. Let $\phi$ be a fundamental sequence for $\alpha$ such that without loss of generality for all $i\in \omega$, $\phi(i)\cong 2\beta+1$ for some $\beta<\alpha$. By definition we have that
  \[\emptyset^{(\alpha)}\equiv_T \{ \langle i,n\rangle\in\omega: n\in \emptyset^{(\phi(i))}\}=\emptyset^{\phi}.\]

  We can now use \cref{lem:uniformpairsofstructures} with $P_{e_i}=\{ x: \langle i,x\rangle \in \emptyset^\phi\}=\emptyset^{(\phi(i))}$,
  and $(\A_i,\mathcal B_i)=(\omega^{\beta_i}\cdot 2, \omega^{\beta_i})$ where $\beta_i$ is such that $\phi(i)=2\beta_i+1$.
  Our structures $\A$ and $\B$ are similar to the successor cases with the exception that for our designated elements $a_e$, $b_e$ and $\hat a_e$, $\hat b_e$ where $e=\langle i,n\rangle$ we let
  \[ [a_e]\cong \begin{cases}
  (\omega^{\beta_i}\cdot 2, \omega^{\beta_i}\cdot 2) & \text{if~} n\in \emptyset^{(\phi(i))},\\
  (\omega^{\beta_i},\omega^{\beta_i}) & \text{if~} n\not\in \emptyset^{(\phi(i))},
  \end{cases}
  \quad \text{and}\quad
  [b_e]\cong (\omega^{\beta_i},\omega^{\beta_i}\cdot2),
  \]
  and we let
  \[ [\hat a_e]\cong \begin{cases}
  (\omega^{\beta_i}\cdot 2, \omega^{\beta_i}\cdot 2) & \text{if~} n\in \emptyset^{(\phi(i))},\\
  (\omega^{\beta_i},\omega^{\beta_i}\cdot 2) & \text{if~} n\not\in \emptyset^{(\phi(i))},
  \end{cases}
  \quad \text{and}\quad
  [\hat b_e]\cong \begin{cases}
  (\omega^{\beta_i}, \omega^{\beta_i}\cdot 2) & \text{if~} n\in \emptyset^{(\phi(i))},\\
  (\omega^{\beta_i},\omega^{\beta_i}) & \text{if~} n\not\in \emptyset^{(\phi(i))}.
  \end{cases}
  \]
  \cref{lem:uniformpairsofstructures} implies that $\mathcal A$ and $\mathcal B$ are computable.
  That $\A$ is $\mathbf 0^{(\alpha)}$-computably categorical follows by a similar argument as in the successor case. First notice that elements satisfying $P_e$ must be sent to $P_e$. Thus, we may fix $e=\langle i,n\rangle$. By the same arguments as in the proof of \cref{theo:degrees}, $\mathbf{0}^{(2\beta_i+1)}$ can compute a partial isomorphism between the $P_e$ substructures of arbitrary computable copies of $\A$. As the $\beta_i$ are bounded by $\alpha$ we have that $\mathbf{0}^{(\alpha)}$ can compute a partial isomorphism between all substructures on $P_e$, $e\in\omega$, uniformly in $e$. It follows that $\mathbf 0^{(\alpha)}$ can compute an isomorphism.

  It remains to prove that every embedding between $\A$ and $\mathcal B$ computes $\mathbf{0}^{(\alpha)}$. It is sufficient to show that every embedding $f:\A\hookrightarrow \mathcal B$ computes $\emptyset^\phi$. This is the case as
  \[ m=\langle i,n\rangle\in \emptyset^\phi \LR n\in\emptyset^{(\phi(i))}\LR f(a_m)=\hat a_m\]
  and likewise $m\not \in \emptyset^\phi\LR f(a_m)=\hat b_m$. As $\deg_T(\emptyset^\phi)= \mathbf{0}^{(\alpha)}$, this proves the theorem.
\end{proof}



\subsection{Structures with no degree of b.e. categoricity}
\label{sec:nodeg}
Here we build examples of b.e.~categoricity spectra with no least degree.
The exposition mainly follows~\cite{MS-15,Bazh-19}.

In this section, trees are treated as substructures of $\omega^{<\omega}$. For a tree $T$, the \emph{branching function} $b_T\colon T\to \omega \cup \{\omega\}$ gives the number of children of a node $\sigma$ from $T$, or more formally:
\[
	b_T(\sigma) = card(\{ n\in\omega\,\colon \sigma\widehat{\ }\langle n\rangle \in T\}).
\]

Let $X\subseteq \omega$ be an oracle. A Turing degree $\mathbf{d}$ is a \emph{PA degree over} $X$ if for any infinite $X$-computable, finite-branching tree $T$ with an $X$-computable branching function $b_T$, there is a $\mathbf{d}$-computable (infinite) path through $T$. Note that the notion of a $PA$ degree over $X$ depends only on the choice of the Turing degree of a set $X$.

The main result of the section is the following

\begin{theorem} \label{theo:PA-deg}
	Let $\alpha$ be a computable non-limit ordinal. Then there is a b.e. trivial computable structure $\mathcal{M}$ such that the b.e. categoricity spectrum of $\mathcal{M}$ is equal to the set of $PA$ degrees over $\mathbf{0}^{(\alpha)}$.
\end{theorem}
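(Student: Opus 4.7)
The plan is to combine the pair-of-well-orderings machinery from \cref{theo:degrees} with a standard coding, in the spirit of \cite{MS-15,Bazh-19}, of paths through $\mathbf{0}^{(\alpha)}$-computable finite-branching trees. Write $\alpha = 2\beta+1$ or $\alpha = 2\beta+2$ and fix the corresponding pair of well-orderings $(\mathcal{L}_0, \mathcal{L}_1)$, which satisfies the hypotheses of \cref{lem:switching} at level $\alpha$ via \cref{lem:oddordinalsscottfam} (or its even-successor analog). Let $(T_e, b_e)_{e\in\omega}$ enumerate, uniformly in $\mathbf{0}^{(\alpha)}$, all finite-branching trees $T_e \subseteq \omega^{<\omega}$ that together with their branching functions $b_e$ are $\mathbf{0}^{(\alpha)}$-computable; by the definition given in the excerpt, a degree $\mathbf{d}$ is PA over $\mathbf{0}^{(\alpha)}$ iff $\mathbf{d}$ uniformly computes a path through every infinite such $T_e$.

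I would build $\mathcal{M}$ in the same language as in the proof of \cref{theo:degrees}, with substructures indexed by pairs $(e,\sigma)$ for $\sigma\in\omega^{<\omega}$. For each such pair we place a designated element $a_{e,\sigma}$ in a predicate $P_e$, and attach to $a_{e,\sigma}$ an equivalence class carrying one of the linear orders $\mathcal{L}_0,\mathcal{L}_1$. In a computable companion copy $\mathcal{M}'\cong\mathcal{M}$, the local arrangement among the children $\sigma^\frown\langle n\rangle$, $n < b_e(\sigma)$, is swapped so that the $\mathcal{L}_0$- versus $\mathcal{L}_1$-type of a child records whether it is \emph{live} (extends to an infinite path through $T_e$) or \emph{dead}. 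Since $\mathcal{L}_1$ does not embed into $\mathcal{L}_0$ (well-orderings of strictly smaller order type admit no embedding from larger ones), any embedding $f\colon\mathcal{M}\hookrightarrow\mathcal{M}'$ is forced to send each $a_{e,\sigma}$ along a chain of live children, and hence reads off an infinite path through each $T_e$. Uniform computability of the coding is supplied by \cref{lem:uniformpairsofstructures} applied to the appropriate $\Pi^0$-in-$\mathbf{0}^{(\alpha)}$ ``live path'' predicates translated to the right level.

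For the upper bound, given computable bi-embeddable copies $\mathcal{A},\mathcal{B}$ of $\mathcal{M}$, a PA-over-$\mathbf{0}^{(\alpha)}$ oracle $\mathbf{d}$ first selects an infinite path $p_e$ through each $T_e$; along $p_e$, \cref{lem:switching} and \cref{lem:oddordinalsscottfam} deliver $\Delta^0_\alpha$-computable partial isomorphisms between the matching $(e,\sigma)$-components of $\mathcal{A}$ and $\mathcal{B}$, which then assemble uniformly into a $\mathbf{d}$-computable embedding (and symmetrically in the other direction). For the lower bound, the preceding paragraph shows that any embedding between the designated computable copies $\mathcal{M}$ and $\mathcal{M}'$ computes a path through every $T_e$, so its Turing degree is PA over $\mathbf{0}^{(\alpha)}$; hence every degree in $\catspb(\mathcal{M})$ is PA over $\mathbf{0}^{(\alpha)}$. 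B.e.\ triviality of $\mathcal{M}$ is inherited from the pair-of-well-ordering components exactly as in \cref{theo:degrees}: every component is itself b.e.\ trivial, and the rigid predicate skeleton forces any embedding to respect the $P_e$-decomposition.

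The main technical obstacle I anticipate is aligning the complexity level of the ``live path'' predicate, which is naturally $\Pi^0_1$-in-$\mathbf{0}^{(\alpha)}$ and therefore sits a bit above $\alpha$, with the level $\alpha$ at which \cref{lem:switching} delivers the partial isomorphisms used in the upper bound. One must absorb this extra quantifier into the auxiliary switching formula $\xi$ furnished by \cref{lem:oddordinalsscottfam}, or else stratify the coding by an extra layer, while simultaneously preserving b.e.\ triviality. This is exactly the point at which the non-limit hypothesis on $\alpha$ is used, since only for successor $\alpha$ do we have the concrete pair-of-well-ordering Scott families of \cref{lem:oddordinalsscottfam} and its even-case analog on which the switching mechanism depends.
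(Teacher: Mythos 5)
There is a genuine gap, and it sits exactly at the point you flag as ``the main technical obstacle'': the liveness predicate. The predicate ``$\sigma$ extends to an infinite path through $T_e$'' is $\Pi^0_1$ relative to $\mathbf{0}^{(\alpha)}$, i.e.\ one jump above the level at which the pairs from \cref{lem:oddordinalsscottfam} and the switching machinery of \cref{lem:switching} operate. This is not a difficulty that can be ``absorbed into $\xi$'' or fixed by an extra layer of coding: if the isomorphism type of the $(e,\sigma)$-component of $\mathcal{M}'$ depends on liveness, then either $\mathcal{M}'$ is not computable, or you must move to pairs at level $\alpha+1$, in which case \cref{lem:switching} only delivers $\Delta^0_{\alpha+1}$ partial isomorphisms and your upper bound produces embeddings computable from $\mathbf{d}\oplus\mathbf{0}^{(\alpha+1)}$ rather than from $\mathbf{d}$. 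Since there are $PA$ degrees over $\mathbf{0}^{(\alpha)}$ that do not compute $\mathbf{0}^{(\alpha+1)}$ (e.g.\ ones low over $\mathbf{0}^{(\alpha)}$), the resulting spectrum would be strictly smaller than the set of $PA$ degrees, and the theorem would fail. A second, independent problem is your enumeration $(T_e,b_e)_{e\in\omega}$ of \emph{all} $\mathbf{0}^{(\alpha)}$-computable finite-branching trees with $\mathbf{0}^{(\alpha)}$-computable branching functions: totality of $b_e$ and finite-branching are $\Pi^0_2$ conditions over the oracle, so no such uniform listing exists, and the construction of $\mathcal{M}$ cannot even get started in the form you describe.

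The missing idea is Proposition~\ref{prop:PA-char}: a degree is $PA$ over $X$ iff it computes a separating set for the pair $\{e : \varphi^X_e(e){\downarrow}=1\}$ and $\{e : \varphi^X_e(e){\downarrow}=0\}$. This replaces both problematic ingredients at once. These two sets are $\Sigma^0_\alpha$ at exactly the level the pairs of well-orders handle, so the paper codes them directly: $P_e(M)$ contains two disjoint well-orders $\mathcal{A}_e,\mathcal{B}_e$, where $\mathcal{A}_e$ is the ``long'' order iff $\varphi^{\emptyset^{(\alpha)}}_e(e){\downarrow}=1$ and $\mathcal{B}_e$ is long iff the output is $0$. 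For the lower bound one builds a single rival copy $\mathcal{R}$ in which the first component is long iff the computation halts with either output; any embedding $\mathcal{M}\hookrightarrow\mathcal{R}$ then reads off a separating set, hence has $PA$ degree. For the upper bound, given an arbitrary computable copy $\mathcal{N}$, one builds a single $\mathbf{0}^{(\alpha)}$-computable binary tree $T$ (depending on $\mathcal{N}$) whose paths decide, for each $e$, which component of $\mathcal{N}$ is $\mathcal{A}_e$ and which is $\mathcal{B}_e$; a $PA$ degree computes a path through this one tree, and only then does \cref{lem:switching} enter, at level $\alpha$, to produce the component isomorphisms. So the trees appear only on the upper-bound side and only one per target copy, rather than being coded into $\mathcal{M}$ itself. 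Your overall architecture (predicates $P_e$, pairs of well-orders, b.e.\ triviality via rigidity of well-orders) matches the paper, but without the separating-set characterization the construction as proposed does not go through.
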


Before the proof of the theorem, we recall some known facts about $PA$ degrees:
\begin{itemize}
	\item[(a)] For any $X$, the set of $PA$ degrees over $X$ is upwards closed (see, e.g., Theorem 6.2 in~\cite{Sim77}).

	\item[(b)] If $\mathbf{d}$ is a $PA$ degree over $X$, then there is a degree $\mathbf{c}$ such that $\mathbf{c} < \mathbf{d}$ and $\mathbf{c}$ is also a $PA$ degree over $X$ (Theorem 6.5.i in~\cite{Sim77}). In other words, the set of $PA$ degrees over $X$ does not have minimal elements.

	\item[(c)] A degree $\mathbf{d}$ is a PA degree over $\mathbf{0}$ if and only if $\mathbf{d}$ computes a complete extension of Peano arithmetic.
\end{itemize}

Our proof of Theorem~\ref{theo:PA-deg} heavily uses the following characterization of $PA$ degrees, obtained by Scott~\cite{Scott62}, Jockusch and Soare~\cite{JS72}, and Solovay:

\begin{proposition}[see Theorem~6.6 in~\cite{Sim77}]\label{prop:PA-char}
	A Turing degree $\mathbf{d}$ is a $PA$ degree over $X$ if and only if there is a $\mathbf{d}$-computable set $A$ with the following properties:
	\[
		\{ e\,\colon \varphi^X_e(e)\downarrow\ = 1\} \subseteq A  \text{ and }
		\{ e\,\colon \varphi^X_e(e)\downarrow\ = 0\} \subseteq \overline{A}.
	\]
\end{proposition}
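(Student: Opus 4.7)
The plan is to prove both directions through the canonical $\Pi^{0,X}_{1}$ class of potential separators, plus a recursion-theorem argument for the reverse direction.

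For the forward implication, I will assume $\mathbf{d}$ is $PA$ over $X$ and consider the binary tree
\[
T^{*}=\{\sigma\in 2^{<\omega}:(\forall e<|\sigma|)(\forall s\leq|\sigma|)(\varphi^{X}_{e,s}(e)\downarrow\in\{0,1\}\Rightarrow\sigma(e)=\varphi^{X}_{e,s}(e))\}.
\]
This tree is $X$-computable, finite-branching with $X$-computable branching function $b_{T^{*}}\leq 2$, and infinite: for every $n$, set $\tau(e)=\varphi^{X}_{e,n}(e)$ whenever that computation has already converged to a value in $\{0,1\}$ within $n$ steps, and $\tau(e)=0$ otherwise; then $\tau\in T^{*}$ has length $n$. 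The $PA$ hypothesis then supplies a $\mathbf{d}$-computable path $f\in [T^{*}]$, and $A=\{e:f(e)=1\}$ satisfies the two required inclusions.

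For the reverse implication, suppose $A$ is a $\mathbf{d}$-computable separator. First I will observe that being $PA$ over $X$ forces $\mathbf{d}\geq_{T}X$, witnessed by the $X$-computable finite-branching tree $\{\sigma:\sigma\prec X\}$ (whose unique path is $X$), so I may freely replace $A$ by $A\oplus X$ and assume $A\geq_{T} X$. Given any infinite $X$-computable finite-branching tree $T$ with $X$-computable $b_{T}$, I first reduce to the binary case by expanding each $\sigma\in T$ into a binary gadget of depth $\lceil\log_{2}b_{T}(\sigma)\rceil$ whose leaves enumerate the children of $\sigma$, obtaining an $X$-computable infinite binary tree $T'$ with an $X$-computable bijection between its paths and those of $T$. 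By the recursion theorem uniformly in $X$, I assign to each $\sigma\in T'$ an index $e_{\sigma}$ for the procedure that enumerates $T'$ and outputs $1$ (resp.\ $0$) the first time the subtree above $\sigma\widehat{\ }0$ (resp.\ $\sigma\widehat{\ }1$) is seen to be finite, and diverges otherwise. Building the path $p_{0}=\langle\rangle$ and $p_{n+1}=p_{n}\widehat{\ }A(e_{p_{n}})$, induction on $n$ shows each $p_{n}$ lies in an infinite subtree of $T'$: if both children of $p_{n}$ extend infinitely either choice preserves this, and if only one child survives then $\varphi^{X}_{e_{p_{n}}}(e_{p_{n}})$ converges to the index of that child, so the separation hypothesis forces $A(e_{p_{n}})$ to agree. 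Translating $p$ back through the $X$-computable bijection gives a $\mathbf{d}$-computable path through $T$.

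The main obstacle will be the recursion-theorem step: producing the indices $e_{\sigma}$ uniformly in $\sigma$ via the $X$-relativized $s$-$m$-$n$ theorem, and verifying that the self-referential ``first-child-to-die'' search has exactly the claimed halting behavior. The other ingredients—defining the tree of separators, carrying out the binary reduction, and extracting a $\mathbf{d}$-computable path from the $A$-indexed selections—are routine once the self-reference is set up correctly.
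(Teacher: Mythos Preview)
The paper does not prove this proposition; it is stated with a citation to Simpson and then used as a black box in the proof of Theorem~\ref{theo:PA-deg}. So there is no proof in the paper to compare against, and your proposal supplies more than the paper does.

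Your argument is essentially the standard one, and the forward direction is fine. In the reverse direction, however, your justification for $\mathbf{d}\geq_T X$ is circular: you invoke ``being $PA$ over $X$ forces $\mathbf{d}\geq_T X$'', but that is precisely the conclusion you are trying to establish, so you are not yet entitled to replace $A$ by $A\oplus X$. The fix is easy and should be made explicit: any separator $A$ already computes $X$. Indeed, by the (unrelativized) $s$-$m$-$n$ theorem there is a computable map $n\mapsto e_n$ with $\varphi^X_{e_n}(x)=X(n)$ for all $x$; then $X(n)=1$ implies $\varphi^X_{e_n}(e_n)=1$ and hence $e_n\in A$, while $X(n)=0$ implies $e_n\notin A$, so $X(n)=A(e_n)$ and $X\leq_T A\leq_T\mathbf{d}$. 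Once this is in place, the rest of your path construction goes through. A minor point: the procedures you attach to the indices $e_\sigma$ ignore their input, so producing them uniformly only requires the $s$-$m$-$n$ theorem, not the recursion theorem.
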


\

\begin{proof}[Proof of \cref{theo:PA-deg}]

Here we give a detailed proof for the case when $\alpha = 2\beta+2$ and $\beta \geq \omega$.

Note that the set $\{ e \,\colon \varphi^{\emptyset^{(2\beta + 2)}}_e(e)\downarrow\  = 1 \}$ is $\Sigma^0_{2\beta+2}$. Therefore, as in Theorem~\ref{theo:degrees}, the results of Ash and Knight allow us to build two computable sequences of linear orders $(\mathcal{A}_{e})_{e\in\omega}$ and $(\mathcal{B}_{e})_{e\in\omega}$ such that:
\begin{gather*}
	\mathcal{A}_{e} \cong
	\begin{cases}
		\omega^{\beta+1} + \omega^{\beta} & \text{if } \varphi^{\emptyset^{(2\beta + 2)}}_e(e)\downarrow\  = 1,\\
		\omega^{\beta+1} & \text{otherwise};
	\end{cases}
	\\
	\mathcal{B}_e \cong
	\begin{cases}
		\omega^{\beta+1} + \omega^{\beta} & \text{if } \varphi^{\emptyset^{(2\beta + 2)}}_e(e)\downarrow\  = 0,\\
		\omega^{\beta+1} & \text{otherwise}.
	\end{cases}
\end{gather*}

The desired computable structure $\mathcal{M}$ is arranged as follows. The language of $\mathcal{M}$ consists of a partial order $\leq$ and infinitely many unary relations $P_e$, $e\in\omega$. The relations $P^{\mathcal{M}}_e$, $e\in\omega$, are pairwise disjoint. If $a\leq b$ inside $\mathcal{M}$, then $a$ and $b$ must satisfy the same $P_e$. The $\leq$-substructure $P_e(M)$ contains (copies of) $\mathcal{A}_e$ and $\mathcal{B}_e$, in a disjoint way.
\setcounter{theorem}{\value{theorem}-1}
\begin{claim}
	The structure $\mathcal{M}$ is b.e. trivial.
\end{claim}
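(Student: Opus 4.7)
The plan is to show that every $\mathcal N$ bi-embeddable with $\mathcal M$ is in fact isomorphic to $\mathcal M$. The argument exploits three features of the construction: the unary predicates $P_e$ partition $M$, distinct $P_e$-pieces of $M$ contain no mutually comparable elements, and each $P_e(M) \cong \mathcal A_e \sqcup \mathcal B_e$ is a disjoint union of two well-orderings, hence has width exactly $2$.

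First I would fix a witness pair $f \colon \mathcal M \hookrightarrow \mathcal N$ and $g \colon \mathcal N \hookrightarrow \mathcal M$. Because unary predicates are preserved by embeddings, $f$ and $g$ restrict to bi-embeddings of $P_e(M)$ with $P_e(N)$ for every $e$. Moreover, every $x \in N$ lies in some $P_e(N)$: $g(x)$ lies in some $P_e(M)$, and preservation of $P_e$ under $g$ pulls $x$ back into $P_e(N)$. The same preservation argument applied to $g$ shows that in $\mathcal N$, too, there are no order relations across distinct $P_e$-pieces (if $x \in P_e(N)$ and $y \in P_{e'}(N)$ with $x \leq y$, then $g(x) \leq g(y)$ forces $e = e'$). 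Thus both $\mathcal M$ and $\mathcal N$ split as disjoint unions $\bigsqcup_e P_e(M)$ and $\bigsqcup_e P_e(N)$ as labelled partial orders, and it suffices to prove $P_e(N) \cong P_e(M)$ for every $e$.

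Fix $e$. Since antichain size is preserved under embeddings in both directions, $P_e(N)$ has width exactly $2$. Dilworth's theorem for finite-width posets then yields a decomposition $P_e(N) = C_1 \sqcup C_2$ into two chains. Under $g$ each $C_i$ embeds into $\mathcal A_e \sqcup \mathcal B_e$ as a chain, and a chain in this disjoint union must lie in a single component, so each $C_i$ is a well-ordering. Symmetrically, the $f$-images of $\mathcal A_e$ and $\mathcal B_e$ in $C_1 \sqcup C_2$ are chains, each confined to one $C_i$, and they cannot share the same $C_i$: otherwise an $a \in \mathcal A_e$ and $b \in \mathcal B_e$ (incomparable in $\mathcal M$) would be sent to comparable elements of a chain. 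Combining the two pairings produces bi-embeddings between matched well-orderings, which are automatically isomorphisms; hence $P_e(N) \cong P_e(M)$.

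The main delicate point is the chain decomposition step. Dilworth's theorem for posets of finite width extends to arbitrary cardinality by a standard compactness argument applied to finite subposets, so invoking it on a width-$2$ poset is routine; once the decomposition is in hand, the remainder is bookkeeping that leverages the fact that $\mathcal A_e$ and $\mathcal B_e$ are well-orderings, whose bi-embeddability collapses to isomorphism.
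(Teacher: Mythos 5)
Your overall strategy is the same as the paper's: use the unary predicates to reduce to the individual $P_e$-pieces, show each $P_e(N)$ is a disjoint union of two well-orders, and finish with the fact that mutually embeddable well-orders are isomorphic. However, the Dilworth step introduces a genuine gap. Dilworth's theorem partitions a width-$2$ poset into two chains $C_1,C_2$, but it does \emph{not} make $C_1$ and $C_2$ mutually incomparable: the three-element poset with $a<c$, $b<c$ and $a,b$ incomparable has width $2$ and Dilworth decomposition $\{a,c\},\{b\}$, yet the chain $\{b,c\}$ is contained in neither part. Consequently your assertion that the $f$-images of $\mathcal{A}_e$ and $\mathcal{B}_e$ are ``each confined to one $C_i$'' is unsupported as written, and even after matching $C_1,C_2$ with $\mathcal{A}_e,\mathcal{B}_e$ as linear orders you could not conclude $P_e(N)\cong P_e(M)$ \emph{as posets}, since a Dilworth decomposition leaves open the possibility of order relations between $C_1$ and $C_2$. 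Both steps require knowing that the two chains are the comparability classes of $P_e(N)$, which Dilworth does not give.

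The repair is immediate and makes Dilworth superfluous: set $C_1=g^{-1}(\mathcal{A}_e)$ and $C_2=g^{-1}(\mathcal{B}_e)$. Because $g$ reflects the order, comparability in $P_e(N)$ is inherited from $\mathcal{A}_e\sqcup\mathcal{B}_e$; hence comparability is transitive on $P_e(N)$, these two sets are mutually incomparable well-ordered chains covering $P_e(N)$, and every chain of $P_e(N)$ (in particular each $f$-image) lies in a single $C_i$. With that decomposition your matching argument and the final appeal to b.e.\ triviality of well-orders go through, and you recover exactly the paper's (much terser) proof: an embedding of the bi-embeddable copy into $\mathcal{M}$ forces each $P_e$-piece to be two disjoint well-orders, and bi-embeddability of well-orders collapses to isomorphism.
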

\begin{proof}
	Let $\widehat{\mathcal{M}}$ be a bi-embeddable copy of $\mathcal{M}$. Since $\widehat{\mathcal{M}}\hookrightarrow \mathcal{M}$, the relations $P_e^{\widehat{\mathcal{M}}}$ are pairwise disjoint, and the $\leq$-substructure $P_e(\widehat{M})$ consists of two disjoint well-orders $\widehat{\mathcal{A}}_e$ and $\widehat{\mathcal{B}}_e$. Since $\widehat{\mathcal{M}} \approx \mathcal{M}$ and every well-order is b.e. trivial, we deduce that the posets $P_e(\widehat{M})$ and $P_e(M)$ are isomorphic. Therefore, we have $\widehat{\mathcal{M}} \cong \mathcal{M}$.
\end{proof}

We show that the b.e. categoricity spectrum of the structure $\mathcal{M}$ coincides with the set of all $PA$ degrees over $\mathbf{0}^{(2\beta + 2)}$.

For a natural number $e$, let $a_e$ and $b_e$ be the $\leq_{\mathcal{M}}$-least elements inside $\mathcal{A}_e$ and $\mathcal{B}_e$, respectively. W.l.o.g., we can assume that given $e$, one can effectively compute $a_e$ and $b_e$.

\begin{claim}\label{lem:upper-bound}
	Let $\mathcal{N}$ be a computable structure isomorphic to $\mathcal{M}$, and let $\mathbf{d}$ be a $PA$ degree over $\mathbf{0}^{(2\beta+2)}$. There is a $\mathbf{d}$-computable isomorphism from $\mathcal{M}$ onto $\mathcal{N}$.
\end{claim}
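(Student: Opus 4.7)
The plan is to construct $F \colon \mathcal M \to \mathcal N$ piece by piece on each $P_e$-component, uniformly in $e$. Since every isomorphism must preserve the unary relations $P_e$, it suffices to produce a $\mathbf d$-computable isomorphism $F_e \colon P_e(M) \to P_e(N)$ uniformly in $e$ and then take $F = \bigcup_e F_e$.

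Fix $e$. The poset $P_e(M)$ decomposes into its two $\leq$-components, the well-orders $\mathcal A_e$ and $\mathcal B_e$, which can be isolated computably starting from the distinguished least elements $a_e$ and $b_e$; likewise $P_e(N)$ decomposes into two well-orders. Each of these four linear orders is isomorphic to either $\omega^{\beta+1}$ or $\omega^{\beta+1} + \omega^{\beta}$. The key observation is that the $\Pi_{2\beta+1}$ formula $\xi(x)$ supplied by the lemma for $\omega^{\beta+1}$ and $\omega^{\beta+1} + \omega^{\beta}$ characterizes the larger type: such a well-order satisfies $\exists x\, \xi(x)$ precisely when it is isomorphic to $\omega^{\beta+1} + \omega^{\beta}$. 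Since $\mathbf d$ is $PA$ over $\mathbf 0^{(\alpha)}$, we have $\mathbf d \geq \mathbf 0^{(\alpha)}$; the predicate ``$\exists x\, \xi(x)$'' is $\Sigma^0_\alpha$ and hence $\mathbf d$-decidable uniformly in a computable index for the component. Thus $\mathbf d$ identifies, uniformly in $e$, the isomorphism type of each of the four components.

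Because $\mathcal N \cong \mathcal M$, the multiset of types appearing in $P_e(N)$ agrees with that in $P_e(M)$, so $\mathbf d$ pairs the components of $P_e(M)$ with those of $P_e(N)$ in a type-preserving way: big with big, small with small, and arbitrarily in the degenerate case where both components share a type. For each matched pair of isomorphic computable well-orders, \cref{lem:switching} is applied with $n = 0$, using the Scott families supplied by the same lemma; this produces a $\Delta^0_\alpha$ and therefore $\mathbf d$-computable index for an isomorphism between them, uniformly from the computable indices of the two well-orders. Assembling the resulting $F_e$ uniformly in $e$ yields the desired $\mathbf d$-computable isomorphism $F \colon \mathcal M \to \mathcal N$.

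The main obstacle is the matching step: \cref{lem:switching} cannot be invoked until the components on the two sides have been paired correctly by isomorphism type, and distinguishing $\omega^{\beta+1}$ from $\omega^{\beta+1} + \omega^{\beta}$ in a computable well-order requires answering a $\Sigma^0_\alpha$ question, for which $\mathbf d \geq \mathbf 0^{(\alpha)}$ is just strong enough. One may alternatively route this identification through the separating set $A$ from \cref{prop:PA-char}, which pins down the type of $\mathcal A_e$ and $\mathcal B_e$ up to the harmless case in which both are $\omega^{\beta+1}$. Once a correct pairing is in hand, the remainder of the construction is a uniform application of the switching lemma.
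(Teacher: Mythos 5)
Your overall architecture (decompose into $P_e$-parts, match the two well-orders on each side by isomorphism type, then invoke \cref{lem:switching}) is the right one, and you correctly identify the matching step as the crux. But the way you resolve it is wrong, and the error sits exactly where the content of the claim lives. You assert that, because ``$\exists x\,\xi(x)$'' is $\Sigma^0_{2\beta+2}$, the degree $\mathbf d$ can \emph{decide} it uniformly in an index for the component, since $\mathbf d\geq\mathbf 0^{(2\beta+2)}$. Under the convention in force here (for $\beta\geq\omega$, as throughout the paper), a $\Sigma^0_{2\beta+2}$ predicate is one that is c.e.\ in $\emptyset^{(2\beta+2)}$, not computable in it; and the sets $U=\{e:\mathcal C_e\models\xi\}$ and $V=\{e:\mathcal D_e\models\xi\}$ are built precisely so as to code the halting problem relative to $\emptyset^{(2\beta+2)}$. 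Deciding them in general requires $\mathbf 0^{(2\beta+3)}$, which a $PA$ degree over $\mathbf 0^{(2\beta+2)}$ need not compute (by the relativized low basis theorem there are such degrees $\mathbf d$ with $\mathbf d'=\mathbf 0^{(2\beta+3)}$). A sanity check that your argument proves too much: it uses only $\mathbf d\geq\mathbf 0^{(2\beta+2)}$, so it would place $\mathbf 0^{(2\beta+2)}$ itself in $\catspb(\mathcal M)$; but the theorem asserts the spectrum is exactly the $PA$ degrees over $\mathbf 0^{(2\beta+2)}$, and no degree is $PA$ over itself.

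What the $PA$ hypothesis actually buys is the ability to \emph{separate} disjoint pairs of $\Sigma^0_{2\beta+2}$ sets, not to decide single ones, and the proof must be organized around that. The paper fixes $\mathbf 0^{(2\beta+2)}$-computable enumerations $U^s,V^s$ and builds a $\mathbf 0^{(2\beta+2)}$-computable binary tree $T$: a string $\sigma$ of length $s$ survives iff for each $e<s$, membership $e\in U^s$ forces $\sigma(e)=\varphi_e^{\emptyset^{(2\beta+2)}}(e)$ and $e\in V^s$ forces $\sigma(e)=1-\varphi_e^{\emptyset^{(2\beta+2)}}(e)$. The true assignment is a path, so $T$ is infinite, and any $\mathbf d$-computable path $P$ yields a correct matching: $P(e)=1$ gives $\mathcal C_e\cong\mathcal A_e$ and $\mathcal D_e\cong\mathcal B_e$, and $P(e)=0$ the reverse (the case $e\notin U\cup V$ being harmless since all four orders are then $\omega^{\beta+1}$). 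Only after this does \cref{lem:switching} enter. Note also that your fallback via \cref{prop:PA-char} is not enough by itself: the separating set there pins down the types of $\mathcal A_e$ and $\mathcal B_e$, but says nothing about which component of the \emph{other} copy $\mathcal N$ is the large one; the tree is precisely the device that makes the two separations cohere.
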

\setcounter{theorem}{\value{theorem}+1}
\begin{proof}
	Given $\mathcal{N}$, it is not hard to produce two computable sequences $(c_e)_{e\in\omega}$ and $(d_{e})_{e\in\omega}$ of elements from $\mathcal{N}$ such that for every $e$:
	\begin{enumerate}
		\item both $c_e$ and $d_e$ satisfy $P_e$ inside $\mathcal{N}$,

		\item $c_e$ and $d_e$ are $\leq_{\mathcal{N}}$-incomparable, and

		\item $c_e <_{\mathbb{N}} d_e$, where $\leq_{\mathbb{N}}$ is the standard ordering of natural numbers.
	\end{enumerate}
	By $\mathcal{C}_e$ we denote the $\leq_{\mathcal{N}}$-substructure of $\mathcal{N}$ containing all elements comparable with $c_e$. Similarly, the linear order $\mathcal{D}_e$ consists of the elements comparable with $d_e$. Clearly, the computable indices of $\mathcal{C}_e$ and $\mathcal{D}_e$ can be recovered effectively in $e$.

	Consider a computable $\Sigma_{2\beta+2}$ sentence
  	\[
		\xi = \exists x \forall y ( x \leq y \rightarrow y\sim_{\beta} x).
	\]
	It is not hard to show that the well-order $\omega^{\beta+1}$ does not satisfy $\xi$. On the other hand, the ordinal $(\omega^{\beta+1} + \omega^{\beta})$ satisfies $\xi$ (just choose any $x$ from the $\omega^{\beta}$-part of the ordinal).

	Let $U:= \{ e\,\colon \mathcal{C}_e \models \xi\}$ and $V:= \{e \,\colon \mathcal{D}_e \models \xi\}$. It is not difficult to show that $U$ and $V$ have the following properties:
	\begin{itemize}
		\item[(a)] $U$ and $V$ are disjoint $\Sigma^0_{2\beta+2}$ sets.

		\item[(b)] If $e\in U$, then $\mathcal{C}_e \cong \omega^{\beta+1} + \omega^{\beta}$ and $\mathcal{D}_e \cong \omega^{\beta+1}$.

		\item[(c)] If $e\in V$, then $\mathcal{C}_e \cong \omega^{\beta+1}$ and $\mathcal{D}_e \cong \omega^{\beta+1}  + \omega^{\beta}$.

		\item[(d)] If $e\not\in U \cup V$, then $\mathcal{C}_e \cong \mathcal{D}_e \cong\omega^{\beta+1}$.
	\end{itemize}
	Fix strongly $\mathbf{0}^{(2\beta+2)}$-computable sequences of finite sets $(U^s)_{s\in\omega}$ and $(V^s)_{s\in\omega}$ such that for each $W \in \{ U, V\}$, we have $W = \bigcup_{s\in\omega} W^s$ and $W^s \subseteq W^{s+1}$ for all $s$.

	We define a $\mathbf{0}^{(2\beta+2)}$-computable tree $T\subset 2^{<\omega}$ as follows. Suppose that a string $\sigma\in 2^{<\omega}$ has length $s$. Then $\sigma \in T$ if and only if for each $e<s$, the following conditions hold:
	\begin{enumerate}
		\item If $e\in U^s$, then $\varphi_e^{\emptyset^{(2\beta+2)}}(e) \downarrow\ = \sigma(e)$.

		\item If $e\in V^s$, then $\varphi_e^{\emptyset^{(2\beta+2)}}(e) \downarrow\ = 1-\sigma(e)$.
	\end{enumerate}
	Note that the tree $T$ is well-defined: If, say, $e\in U^s$, then $\mathcal{C}_e$ is a copy of $\omega^{\beta+1} + \omega^{\beta}$. Since $\mathcal{N}\cong \mathcal{M}$, we deduce that $\varphi_e^{\emptyset^{(2\beta+2)}}(e) \downarrow\ \in \{ 0, 1\}$.

	Recall that $\mathbf{d}$ is a $PA$ degree over $\mathbf{0}^{(2\beta+2)}$. It is easy to show that the branching function $b_T$ is $\mathbf{0}^{(2\beta+2)}$-computable. Hence, there is a $\mathbf{d}$-computable path $P$ through $T$. 	An easy analysis of the definition of $T$ shows the following: for any $e\in\omega$,
	\begin{enumerate}
		\item if $P(e) = 1$ , then $\mathcal{C}_e \cong \mathcal{A}_e$ and $\mathcal{D}_e \cong \mathcal{B}_e$;

		\item if $P(e) = 0$, then $\mathcal{C}_e \cong \mathcal{B}_e$ and $\mathcal{D}_e \cong \mathcal{A}_e$.
	\end{enumerate}
	Thus, there is a $\mathbf{d}$-computable function $f(e,i)$ with the following property: If $i$ is a computable index of a structure $\mathcal{L}_e \in \{ \mathcal{A}_e, \mathcal{B}_e\}$, then $f(e,i)$ is a computable index of a structure $\mathcal{R}_e \in \{ \mathcal{C}_e, \mathcal{D}_e\}$ such that $\mathcal{R}_e$ is isomorphic to $\mathcal{L}_e$.

	We apply \cref{lem:switching} to the indices provided by the function $f(e,i)$ and recover (uniformly in $e$) a $\Delta^0_{2\beta+2}$-index for a $\mathbf{0}^{(2\beta+2)}$-computable isomorphism $g_e$ from $\mathcal{L}_e$ onto $\mathcal{R}_e$. Since $\mathbf{d} > \mathbf{0}^{(2\beta + 2)}$, one can easily construct a $\mathbf{d}$-computable isomorphism $h \colon \mathcal{M} \cong\mathcal{N}$, extending the isomorphisms $f_e$.
\end{proof}

\cref{lem:upper-bound} implies that every $PA$ degree over $\mathbf{0}^{(2\beta+2)}$ belongs to the b.e.\ categoricity spectrum of $\mathcal{M}$.

Now we define a new computable copy $\mathcal{R}$ of the structure $\mathcal{M}$. We build two computable sequences of linear orders $(\mathcal{C}_e)_{e\in\omega}$ and $(\mathcal{D}_e)_{e\in\omega}$ such that:
\[
	\mathcal{C}_{e} \cong
	\begin{cases}
		\omega^{\beta+1} + \omega^{\beta} & \text{if } \varphi^{\emptyset^{(2\beta + 2)}}_e(e)\downarrow\  \in \{0, 1\},\\
		\omega^{\beta+1} & \text{otherwise};
	\end{cases}
	\quad\text{and}\quad
	\mathcal{D}_e \cong \omega^{\beta+1}.
\]
For every $e\in\omega$, the $P_e$-part of the structure $\mathcal{R}$ contains copies of $\mathcal{C}_e$ and $\mathcal{D}_e$, in a disjoint way. Clearly, $\mathcal{R}$ is isomorphic to $\mathcal{M}$.

Let $c_e$ and $d_e$ be the least elements inside $\mathcal{C}_e$ and $\mathcal{D}_e$, respectively. As before, we assume that one can compute $c_e$ and $d_e$, uniformly in $e$.

Suppose that $f$ is an arbitrary isomorphic embedding from $\mathcal{M}$ into $\mathcal{R}$. We define a $\deg_T(f)$-computable set $X$ as follows: a number $e$ belongs to $X$ if and only if the element $f(a_e)$ is comparable with $c_e$ inside $\mathcal{R}$.

Note that the ordinal $\omega^{\beta+1} + \omega^{\beta}$ cannot be isomorphically embedded into the well-order $\omega^{\beta+1}$. Therefore, the set $X$ has the following properties:
\begin{itemize}
	\item[(a)] If $\varphi^{\emptyset^{(2\beta+2)}}_e(e) \downarrow\ = 1$, then $\mathcal{A}_e\cong \mathcal{C}_e\cong \omega^{\beta+1} + \omega^{\beta}$ and hence, $e\in X$.

	\item[(b)] If $\varphi^{\emptyset^{(2\beta+2)}}_e(e) \downarrow\ = 0$, then $\mathcal{B}_e\cong \mathcal{C}_e\cong \omega^{\beta+1} + \omega^{\beta}$ and $e\not\in X$.
\end{itemize}

By Proposition~\ref{prop:PA-char}, we obtain that $\deg_T(f)$ is a $PA$ degree over $\mathbf{0}^{(2\beta + 2)}$. This shows that every degree from the b.e. categoricity spectrum of $\mathcal{M}$ is a $PA$ degree over $\mathbf{0}^{(2\beta + 2)}$.

The proof for the case $\alpha = 2\beta + 1$ is very similar, modulo the following key point: one needs to employ the well-orders $\omega^{\beta}$ and $\omega^{\beta}\cdot 2$ in place of $\omega^{\beta+1}$ and $\omega^{\beta+1} + \omega^{\beta}$, respectively. The proof for finite $\alpha$ (either even or odd) can be obtained via minor modifications.
Theorem~\ref{theo:PA-deg} is proved.
\end{proof}

\begin{cor}
	There is a structure $\mathcal S$ without degree of bi-embeddable categoricity but $\catspb(\mathcal S)\cap \mathrm{HYP}\neq \emptyset$ and $\catspb(\mathcal S)$ does not have minimal elements.
\end{cor}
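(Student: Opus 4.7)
The plan is to extract $\mathcal{S}$ directly from \cref{theo:PA-deg}. Fix any computable non-limit ordinal $\alpha$ (for concreteness I would take $\alpha = 1$), and let $\mathcal{S}$ be the structure $\mathcal{M}$ furnished by the theorem, so that
\[
  \catspb(\mathcal{S}) = \{\mathbf{d} : \mathbf{d} \text{ is a PA degree over } \mathbf{0}^{(\alpha)}\}.
\]
All three required properties then follow immediately from the classical facts (a)--(c) about PA degrees that are recalled just before \cref{theo:PA-deg}.

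For the ``no minimal element'' clause I would invoke fact (b), which says that the collection of PA degrees over any fixed oracle $X$ contains no minimal element. Specialising to $X = \mathbf{0}^{(\alpha)}$ shows that $\catspb(\mathcal{S})$ has no minimal degree; in particular it has no least element, which gives ``no degree of bi-embeddable categoricity.''

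For the hyperarithmetic intersection I would apply the Low Basis Theorem relativised to $\mathbf{0}^{(\alpha)}$ to obtain a PA degree over $\mathbf{0}^{(\alpha)}$ below $\mathbf{0}^{(\alpha+1)}$. Fact (a) (upward closure of PA degrees) then implies that $\mathbf{0}^{(\alpha+1)}$ is itself a PA degree over $\mathbf{0}^{(\alpha)}$, so $\mathbf{0}^{(\alpha+1)} \in \catspb(\mathcal{S}) \cap \mathrm{HYP}$.

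Because each of the three clauses is an immediate consequence of \cref{theo:PA-deg} together with standard structural facts for PA degrees, there is no genuine obstacle in the argument; the only points requiring care are verifying that the chosen $\alpha$ falls within the scope of \cref{theo:PA-deg} and noting that the relativised Low Basis Theorem applies uniformly at arbitrary hyperarithmetic oracle levels.
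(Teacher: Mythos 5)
Your proposal is correct and is essentially the argument the paper intends: the corollary is stated without proof precisely because it follows at once from \cref{theo:PA-deg} together with the facts (a)--(c) about $PA$ degrees recalled just before it, exactly as you spell out (no minimal elements gives no least element, and the relativised Low Basis Theorem plus upward closure, or simply the observation that $\mathbf{0}^{(\alpha+1)}$ is $PA$ over $\mathbf{0}^{(\alpha)}$ and is hyperarithmetical, gives the nonempty intersection with $\mathrm{HYP}$).
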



\section{Categoricity vs bi-embeddable categoricity spectra}
In this section, we explore the connections between categoricity and b.e.\ categoricity spectra. We begin by discussing limitations to b.e.\ categoricity spectra, by showing examples of sets of degrees that are not b.e. categoricity spectra, and degrees that are not degrees of b.e.\ categoricity. First, we give a necessary condition for a degree to be a degree of bi-embeddable categoricity and show that any b.e.\ categoricity spectrum is meager.  It is still open whether there are examples separating categoricity spectra from b.e. categoricity spectra.

\begin{theorem}\label{theo:degreesarehyp}
	Every degree of bi-embeddable categoricity is hyperarithmetical.
\end{theorem}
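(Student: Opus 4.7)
The plan is to follow the template of Fokina, Kalimullin and Miller~\cite{fokina2010} for the isomorphism case. Let $\mathcal{S}$ be a computable structure whose degree of bi-embeddable categoricity is $\mathbf{d}$. It will be enough to exhibit one computable ordinal $\alpha$ with $\mathbf{0}^{(\alpha)}\in\catspb(\mathcal{S})$: by minimality of $\mathbf{d}$ in $\catspb(\mathcal{S})$ this forces $\mathbf{d}\leq_{T}\mathbf{0}^{(\alpha)}$, and hence $\mathbf{d}$ is hyperarithmetical.

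I would first argue pointwise. For any two computable bi-embeddable copies $\mathcal{A}$ and $\mathcal{B}$ of $\mathcal{S}$, the existence of an embedding $\mathcal{A}\hookrightarrow\mathcal{B}$ is a $\Sigma^{1}_{1}$ statement with computable parameters. By Kreisel's $\Sigma^{1}_{1}$ absoluteness between $L_{\omega_{1}^{\mrm{CK}}}$ and $V$, such an embedding already lives in $L_{\omega_{1}^{\mrm{CK}}}$ and is therefore hyperarithmetical. Thus for every pair of computable bi-embeddable copies there are hyperarithmetical embeddings in both directions; what is missing is a \emph{uniform} computable $\alpha$ for which one may pick them in $\Delta^{0}_{\alpha}$ across all pairs.

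To obtain the uniform bound I would invoke $\Sigma^{1}_{1}$-bounding. The set $J=\{(e,i):\mathcal{A}_{e}\biemb\mathcal{S}\biemb\mathcal{A}_{i}\}$ is $\Sigma^{1}_{1}$, and to each $(e,i)\in J$ one associates an ordinal notation $\gamma(e,i)\in\mathcal{O}$ witnessing a pair of $\Delta^{0}_{\gamma(e,i)}$ embeddings. If the graph of $\gamma$ can be written as a $\Sigma^{1}_{1}$ relation on $J\times\mathcal{O}$, then $\Sigma^{1}_{1}$-bounding delivers a single $\alpha<\omega_{1}^{\mrm{CK}}$ with $|\gamma(e,i)|<\alpha$ for every $(e,i)\in J$. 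Assembling the individual embeddings into one $\mathbf{0}^{(\alpha)}$-computable procedure, uniformly in $(e,i)$, can then be done by the pairs-of-structures machinery packaged in \cref{lem:switching} (or a direct back-and-forth at level $\alpha$). This certifies $\mathbf{0}^{(\alpha)}\in\catspb(\mathcal{S})$ and completes the argument.

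The main obstacle is the definability clause in the bounding step: the naive ``least witnessing ordinal'' is $\Pi^{1}_{1}$ (it asserts the nonexistence of lower-complexity embeddings) and does not immediately submit to $\Sigma^{1}_{1}$-bounding. I would circumvent this either by replacing ``least'' with an arbitrary $\Sigma^{1}_{1}$-uniform choice of witness, or by replacing the bounding step altogether with a Scott-style analysis of $\mathcal{S}$: developing a bi-embeddability analogue of Nadel's theorem (every computable structure has bi-embeddability Scott rank below $\omega_{1}^{\mrm{CK}}$) and reading $\alpha$ off the rank, at which point the proofs of \cref{theo:degrees,theo:limitdegree} already provide the template for turning such a rank into uniform $\mathbf{0}^{(\alpha)}$-computable embeddings.
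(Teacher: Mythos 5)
There is a genuine gap, and it occurs at the very first step. You assert that because ``there is an embedding $\mathcal{A}\hookrightarrow\mathcal{B}$'' is a $\Sigma^1_1$ statement with computable parameters, absoluteness gives a hyperarithmetical witness. This is false: nonempty $\Sigma^1_1$ (indeed $\Pi^0_2$) sets of reals need not contain hyperarithmetical members, and the relevant basis theorems (Kleene, Gandy, Kreisel) give members recursive in $\mathcal{O}$, members low for $\omega_1^{\mathrm{CK}}$, or members avoiding a fixed non-hyperarithmetical hyperdegree --- but never hyperarithmetical members in general. The paper itself contains a direct counterexample to your claim: by \cref{prop:etanonhyp}, $\eta$ is bi-embeddable with a computable copy of the Harrison order $\omega_1^{\mathrm{CK}}\cdot(1+\eta)$ having no hyperarithmetic descending sequences, yet every embedding of $\eta$ into that copy computes a descending sequence and hence is not hyperarithmetical. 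So the ``pointwise'' step fails, and with it the whole plan: there is no computable $\alpha$ with $\mathbf{0}^{(\alpha)}\in\catspb(\eta)$ at all. (Your target statement, that some $\mathbf{0}^{(\alpha)}$ lies in the spectrum whenever a degree of b.e.\ categoricity exists, is by upward closure of the spectrum \emph{equivalent} to the theorem, so nothing short of a full proof will establish it; the bounding and ``bi-embeddability Scott rank'' repairs you sketch are not carried out and cannot rest on the false pointwise claim.)

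The paper's actual argument is contrapositive and never produces a hyperarithmetical member of the spectrum. Fix $\mathbf{d}\notin\mathrm{HYP}$ and enumerate the computable bi-embeddable copies $\mathcal{A}_0,\mathcal{A}_1,\dots$ of $\mathcal{A}$. The set of pairs of embeddings between $\mathcal{A}_0$ and $\mathcal{A}_i$ is a nonempty $\Sigma^1_1$ set, so the Kreisel Basis Theorem (relativized at each stage) yields pairs $(f_i,g_i)$ with $\mathbf{d}\not\leq_h\bigoplus_{1\leq i\leq n}(f_i,g_i)$ for every $n$. An exact pair $\mathbf{a},\mathbf{b}$ for this increasing sequence then bounds witnesses for all pairs of copies (composing through $\mathcal{A}_0$); if $\mathbf{d}$ were a degree of b.e.\ categoricity it would lie below both $\mathbf{a}$ and $\mathbf{b}$, hence below some finite join, a contradiction. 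This is the same Kreisel-basis-plus-exact-pair template as in the isomorphism case of Fokina--Kalimullin--Miller that you intended to follow; the tool you actually need from Kreisel is the basis theorem for avoiding a hyperdegree, not an absoluteness principle producing hyperarithmetical witnesses.
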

\begin{proof}
  Let $\mathbf{d}\not\in \mathrm{HYP}$, and let $\mathcal A$ be a computable structure. We show that $\mathbf d$ is not a degree of b.e.\ categoricity for $\mathcal A$. Towards this let $\mathcal A_0,\mathcal A_1,\dots$ be an enumeration of all computable structures bi-embeddable with $\A$. For $f,g:\omega\ra \omega$ let $(f,g):\omega\ra \omega$ be defined as $(f,g)(2x)=f(x)$ and $(f,g)(2x+1)=g(x)$. Then the set
  \[ \{(f,g): f:\A_0\hookrightarrow \A_1\mathbin{\&} g:\A_1 \hookrightarrow\A_0\}\]
  is $\Pi^0_2$, and thus $\Sigma_1^1$. Therefore, by Kreisel's Basis Theorem~\cite[Theorem 7.2]{sacks1990}, there exists a pair of embeddings $(f_1:\A_0\hookrightarrow \A_1,g_1:\A_1\hookrightarrow\A_0)$ such that $\mathbf d\not \leq_h f_1$. Suppose we are given pairs of embeddings $(f_i:\A_0\hookrightarrow \A_i,g_i:\A_i\hookrightarrow \A_0)$ for $1\leq i\leq n$  with $\mathbf d \not\leq_h\bigoplus_{1\leq i\leq n} (f_i,g_i)$. Then, by Kreisel's Basis Theorem relativized to $ \bigoplus_{1\leq i\leq n} (f_i,g_i)$, there exists a pair of embeddings $(f_{n+1},g_{n+1})$ such that $\mathbf d\not \leq_h \bigoplus_{1\leq i\leq n+1} (f_i,g_i)$. Now, let $\mathbf a,\mathbf b$ be an exact pair for the sequence $(\bigoplus_{1\leq j\leq i} (f_j,g_j))_{i\in\omega}$. Then $\mathbf a$ and $\mathbf b$ can compute a pair of embeddings between any two bi-embeddable copies of $\A_0$. This implies, that if $\mathbf d$ is a degree of categoricity for $\A$, then $\mathbf d\leq \mathbf a$ and $\mathbf d\leq \mathbf b$. However, since $\mathbf a$ and $\mathbf b$ are an exact pair, this implies that $\mathbf d \leq_T \bigoplus_{1\leq i\leq n} (f_i,g_i)$ for some $n$, a contradiction.
\end{proof}

We write $\A\approx_{\mathbf{d}}\B$ if there are two $\mathbf{d}$-computable embeddings witnessing that $\A\approx\B$.

\begin{theorem}
	For every computable structure $\mathcal S$, $\catspb(\mathcal S)$ either coincides with all Turing degrees or  is meager.
\end{theorem}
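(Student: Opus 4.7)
The plan is a Baire category argument in $2^\omega$, directly parallel to the classical argument for categoricity spectra. Identify $\catspb(\mathcal{S})$ with its preimage in $2^\omega$ under the degree map. Since $\catspb(\mathcal S)$ is upward closed under $\leq_T$, if it does not contain every Turing degree then in particular $\mathbf 0 \notin \catspb(\mathcal S)$, so one can fix a computable $\A \approx \mathcal S$ with $\A \not\approx_{\mathbf 0} \mathcal S$. For each pair $(e_0, e_1) \in \omega^2$, set
\[
  U_{e_0, e_1} = \{ D \in 2^\omega : \varphi_{e_0}^D \colon \A \hookrightarrow \mathcal S \text{ and } \varphi_{e_1}^D \colon \mathcal S \hookrightarrow \A \text{ are both total embeddings} \}.
\]
Any $D$ whose degree lies in $\catspb(\mathcal S)$ must belong to some $U_{e_0, e_1}$, so $\catspb(\mathcal S) \subseteq \bigcup_{e_0, e_1} U_{e_0, e_1}$; thus it suffices to show that each $U_{e_0, e_1}$ is meager.

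Each $U_{e_0, e_1}$ is $\Pi^0_2$ and therefore has the Baire property. Suppose for contradiction that $U := U_{e_0, e_1}$ is non-meager; then $U$ is comeager in some basic clopen set $[\sigma] \subseteq 2^\omega$. For each $n$, let $V_n$ be the effectively open set of oracles $G$ on which $\varphi_{e_0}^G$ and $\varphi_{e_1}^G$ are defined on $\{0, \ldots, n-1\}$ and the computed values respect injectivity and all atomic formulas among the first $n$ elements of $\A$ and $\mathcal S$. Since $U \subseteq V_n$ and $U$ is dense in $[\sigma]$, each $V_n$ is dense in $[\sigma]$.

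Using this density, build computably a chain $\sigma = \tau_0 \subsetneq \tau_1 \subsetneq \cdots$ by searching, at stage $n$, for the first $\tau_{n+1} \supseteq \tau_n$ with $\tau_{n+1} \in V_{n+1}$; this is a $\Sigma^0_1$ search that succeeds by density. The increasing union $\tau_\infty$ is then a computable element of $2^\omega$, and because the use of each converged computation $\varphi_{e_i}^{\tau_n}(k)$ is already contained in $\tau_n$, the functions $\varphi_{e_0}^{\tau_\infty}$ and $\varphi_{e_1}^{\tau_\infty}$ are total and stabilize at the corresponding stages, yielding a pair of \emph{computable} embeddings witnessing $\A \approx_{\mathbf 0} \mathcal S$. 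This contradicts the choice of $\A$, so each $U_{e_0, e_1}$ is meager and the theorem follows.

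The only delicate point is verifying that maintaining consistency on finite initial segments at every stage produces genuine embeddings in the limit, but this is immediate: injectivity and atomic agreement are finitary properties, so any violation would already exclude some $\tau_n$ from the relevant $V_n$. The essential insight of the proof is therefore that the Baire property, combined with the fact that the embedding axioms are finitely refutable, promotes comeagerness of $U_{e_0, e_1}$ into outright computability of an embedding pair.
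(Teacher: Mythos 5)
Your proof is correct and follows essentially the same route as the paper: reduce to a single computable $\A\approx\mathcal S$ with $\A\not\approx_{\mathbf 0}\mathcal S$, cover the relevant oracles by countably many sets indexed by pairs of reduction indices, and show each is small by the dichotomy ``either failure is forced on a cylinder, or a computable search through consistent finite oracle segments yields computable embeddings, a contradiction.'' The only cosmetic difference is that the paper proves each piece nowhere dense directly, while you pass through the Baire property of $\Pi^0_2$ sets to get comeagerness on a cylinder before running the same finite-extension argument.
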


\begin{proof}
Assume that $\mathbf{0}\notin\catspb(\mathcal S)$ and let $\A \approx\mathcal{S}$ such that $\mathbf{0}\notin\{\mathbf{d}: \A \approx_{\mathbf{d}} \mathcal{S}\} $. We will show that $\{\mathbf{d}: \A \approx_{\mathbf{d}} \mathcal{S}\} $ is meager. The theorem will follow by observing that
\[
\catspb(\mathcal S)=\bigcap_{\A \in [\mathcal{S}]_\approx} \{\mathbf{d}: \A \approx_{\mathbf{d}} \mathcal{S}\}.
\]
First, note that $\{\mathbf{d}: \A \approx_{\mathbf{d}} \mathcal{S}\}= \bigcup_{\langle e,i\rangle \in\omega} P_{\langle e,i\rangle}$, where
\[
P_{\langle e,i\rangle}=\{X: \phi^X_e: \A \hookrightarrow \mathcal{S} \mbox{  and } \phi^X_i: \mathcal{S} \hookrightarrow \mathcal{A}\}.
\]
We will prove that all $P_{\langle e,i\rangle}$'s are nowhere dense. Given $\sigma\in 2^{\omega}$, look for $\tau\supseteq \sigma$ such that, for all $X\supseteq \tau$,  one of the three following holds: $\phi^X_e$ or $\phi^X_i$ is nontotal; $\phi^X_e$ is not embedding from $\A$ to $\mathcal{S}$; $\phi^X_i$ is not an embedding from $\mathcal{S}$ to $\mathcal{A}$. We distinguish two cases.

\begin{enumerate}
\item[\emph{(i)}] If such $\tau$ exists, then $\{X: X\supseteq \tau\}\subseteq \overline{P_{\langle e,i\rangle}}$.
\item[\emph{(ii)}] If there is no such $\tau$, we claim that $\A$ and $\mathcal{S}$ are computably bi-embeddable, a contradiction. To see this,   given $\sigma$ define two computable sets $Y_0=\cup_{k\in\omega} \rho_k$ and $Y_1=\cup_{k\in\omega} \xi_k$, where
\[
\rho_0=\sigma, \quad \rho_{k+1}=\mbox{ the least  $\rho\supseteq \rho_k$ such that  $\phi_e^{\rho}(k)\downarrow$};
\]
\[
\xi_0=\sigma, \quad \xi_{k+1}=\mbox{ the least  $\xi\supseteq \xi_k$ such that  $\phi_i^{\xi}(k)\downarrow$}.
\]
Since \emph{(i)} does not hold, it must be the case that both $\phi_e^{Y_0}$ and $\phi_i^{Y_1}$ are total, and moreover $\phi^{Y_0}_e$ is an embedding from $\A$ to $\mathcal{S}$ and $\phi^{Y_1}_i$ is an embedding from $\mathcal{S}$ to $\A$. So, this case never holds.
%
%
%
\end{enumerate}
Thus, $P_{\langle e,i\rangle}$ is nowhere dense, giving that  $\{\mathbf{d}: \A \approx_{\mathbf{d}} \mathcal{S}\} $ is meagre.
\end{proof}

By calculating the complexity of the forcing condition in the above proof, it is not hard to show that there is a $\Delta^0_3$ degree that can not be a degree of b.e.\ categoricity. The same result holds for degree of categoricity. Anderson and Csima~\cite{anderson2012} also proved that no noncomputable hyperimmune-free degree can be a degree of categoricity. Their proof extends with almost no modification to b.e.\ categoricity.

The study of which degrees can not be degrees of categoricity recently motivated the notion of  \emph{lowness for isomorphism}~\cite{FS-14}. A Turing degree $\mathbf{d}$ is \emph{low for isomorphism} if for any computable structures $\A\cong\B$, the existence of a $\mathbf{d}$-computable isomorphism from $\A$ onto $\B$ implies that $\A$ and $\B$ are already computably isomorphic.

The next definition gives two variants of how one can formalize a notion of lowness in the setting of isomorphic embeddings. Proposition~\ref{prop:equivalent-low} shows that the two variants turn to be equivalent.



\begin{definition}
	Let $\mathbf{d}$ be a Turing degree. The degree $\mathbf{d}$ is \emph{low for embeddings} if for any computable structures $\mathcal{A}$ and $\mathcal{B}$, we have
	\[
		(\exists f \leq_T \mathbf{d}) (f \colon \mathcal{A} \hookrightarrow \mathcal{B}) \ \Rightarrow\ (\exists g \equiv_T \mathbf{0}) (g \colon \mathcal{A} \hookrightarrow \mathcal{B}).
	\]

	The degree $\mathbf{d}$ is \emph{low for bi-embeddings} if for any computable structures $\mathcal{A}$ and $\mathcal{B}$,
	\[
		(\mathcal{A} \approx_{\mathbf{d}} \mathcal{B}) \ \Rightarrow (\mathcal{A} \approx_{\mathbf{0}} \mathcal{B}).
	\]
\end{definition}

\begin{proposition}\label{prop:equivalent-low}
	A degree $\mathbf{d}$ is low for embeddings if and only if it is low for bi-embeddings.
\end{proposition}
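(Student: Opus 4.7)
The plan is to prove both implications, with the substantive work in the direction from low for bi-embeddings to low for embeddings. For the forward direction, suppose $\mathbf{d}$ is low for embeddings and $\mathcal{A} \approx_{\mathbf{d}} \mathcal{B}$. The two $\mathbf{d}$-computable embeddings witnessing $\mathcal{A} \approx_{\mathbf{d}} \mathcal{B}$ can each be replaced by a computable embedding via the hypothesis, yielding $\mathcal{A} \approx_{\mathbf{0}} \mathcal{B}$.

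For the reverse direction, assume $\mathbf{d}$ is low for bi-embeddings, and let $f \colon \mathcal{A} \hookrightarrow \mathcal{B}$ be a $\mathbf{d}$-computable embedding between arbitrary computable structures. The idea is to build auxiliary computable structures $\mathcal{C}, \mathcal{D}$ with $\mathcal{C} \approx_{\mathbf{d}} \mathcal{D}$ from which a computable embedding $\mathcal{D} \hookrightarrow \mathcal{C}$ can be decoded into a computable embedding $\mathcal{A} \hookrightarrow \mathcal{B}$. Expand the language by a fresh binary relation symbol $E$ and, for a structure $\mathcal{X}$, let $\mathcal{X}^{E}$ denote $\mathcal{X}$ with $E$ interpreted as the total relation, so that $\mathcal{X}^{E}$ forms a single $E$-class. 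Set $\mathcal{C} = \bigsqcup_{i \in \omega} \mathcal{B}^{E}_{i}$, a disjoint union of $\omega$ disjoint copies of $\mathcal{B}^{E}$, and $\mathcal{D} = \mathcal{A}^{E} \sqcup \mathcal{C}$. Both are computable, and their $E$-classes are precisely the individual summands. The inclusion already furnishes a computable embedding $\mathcal{C} \hookrightarrow \mathcal{D}$, while sending $\mathcal{A}^{E}$ into $\mathcal{B}^{E}_{0}$ via $f$ and each $\mathcal{B}^{E}_{i} \subseteq \mathcal{C} \subseteq \mathcal{D}$ identically onto $\mathcal{B}^{E}_{i+1} \subseteq \mathcal{C}$ gives a $\mathbf{d}$-computable embedding $\mathcal{D} \hookrightarrow \mathcal{C}$; hence $\mathcal{C} \approx_{\mathbf{d}} \mathcal{D}$.

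Lowness for bi-embeddings then produces a computable embedding $g \colon \mathcal{D} \hookrightarrow \mathcal{C}$. Because $g$ preserves $E$, the image $g[\mathcal{A}^{E}]$ lies in a single $E$-class of $\mathcal{C}$, i.e., inside some copy $\mathcal{B}^{E}_{k}$, and the index $k$ is computable by evaluating $g$ on a fixed element of $\mathcal{A}$ and reading off the summand. Composing $g \restrict \mathcal{A}^{E}$ with the canonical computable identification $\mathcal{B}^{E}_{k} \cong \mathcal{B}^{E}$ and then forgetting $E$ yields a computable embedding $\mathcal{A} \hookrightarrow \mathcal{B}$, as required. The main obstacle is precisely this decoding step: the construction must force every embedding $\mathcal{D} \hookrightarrow \mathcal{C}$ to confine the image of the $\mathcal{A}$-part to a single copy of $\mathcal{B}$, since otherwise one would only extract a computable embedding of $\mathcal{A}$ into a disjoint union of copies of $\mathcal{B}$, which in general is strictly weaker than an embedding into $\mathcal{B}$. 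Interpreting $E$ as the total relation on each summand is exactly the device that makes the $E$-classes rigid enough for this confinement to be automatic.
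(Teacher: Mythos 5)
Your proof is correct and follows essentially the same route as the paper's: the same auxiliary structures (infinitely many $E$-tagged copies of $\mathcal{B}$, with $\mathcal{A}$ added as one extra class), the same shift-by-one trick for the computable embedding in one direction, and the same decoding of a computable embedding back to $\mathcal{A}\hookrightarrow\mathcal{B}$ via preservation of $E$-classes. The only cosmetic difference is that you argue the hard direction directly while the paper argues it contrapositively.
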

\begin{proof}
	It is clear that every low for embeddings degree is also low for bi-embeddings. Suppose that a degree $\mathbf{d}$ is not low for embeddings, i.e. there exist computable structures $\mathcal{A}$ and $\mathcal{B}$ such that there is a $\mathbf{d}$-computable isomorphic embedding $f\colon  \mathcal{A} \hookrightarrow \mathcal{B}$, but $\mathcal{A}$ is not computably embeddable into $\mathcal{B}$. W.l.o.g., one may assume that the domains of $\mathcal{A}$ and $\mathcal{B}$ are both equal to $\omega$.

	We define new computable structures $\mathcal{A}^\ast$ and $\mathcal{B}^\ast$ as follows.
	\begin{itemize}
		\item The language of our structures contains the language of $\mathcal{A}$ and a new equivalence relation $E$.

		\item The structure $\mathcal{B}^{\ast}$ is a disjoint union of infinitely many copies of $\mathcal{B}$. For $i\in\omega$, the $i^{\text{th}}$ copy of $\mathcal{B}$ inside $\mathcal{B}^{\ast}$ has domain $\{ \langle i,x\rangle \,\colon x\in\omega\}$. Each copy of $\mathcal{B}$ forms an $E$-equivalence class.

		\item The structure $\mathcal{A}^{\ast}$ is arranged similarly to $\mathcal{B}^{\ast}$, modulo the following modification: The $0^{\text{th}}$ copy of $\mathcal{B}$ should be replaced by a copy of $\mathcal{A}$.
	\end{itemize}

	It is not hard to show that $\mathcal{A}^{\ast}$ and $\mathcal{B}^{\ast}$ have the following properties:
	\begin{enumerate}
		\item $\mathcal{A}^{\ast}$ and $\mathcal{B}^{\ast}$ are bi-embeddable.

		\item There is a computable embedding from $\mathcal{B}^{\ast}$ into $\mathcal{A}^{\ast}$: just map the $i^{\text{th}}$ copy of $\mathcal{B}$ inside $\mathcal{B}^{\ast}$ onto the $(i+1)^{\text{th}}$ copy of $\mathcal{B}$ inside $\mathcal{A}^{\ast}$.

		\item There is a $\mathbf{d}$-computable embedding from $\mathcal{A}^{\ast}$ into $\mathcal{B}^{\ast}$: The embedding uses $f$ to map the copy of $\mathcal{A}$ inside $\mathcal{A}^{\ast}$ into the $0^{\text{th}}$ copy of $\mathcal{B}$ inside $\mathcal{B}^{\ast}$. All the other $E^{\mathcal{A}}$-classes are mapped in a straightforward way.
	\end{enumerate}
	Therefore, we obtain that $\mathcal{A} \approx_{\mathbf{d}} \mathcal{B}$. On the other hand, if $h\colon \mathcal{A}^{\ast} \hookrightarrow \mathcal{B}^{\ast}$, then the function
	\[
		h^{\#} := h\upharpoonright \{ \langle 0,x\rangle \,\colon x\in\omega\}
	\]
	induces an isomorphic embedding from $\mathcal{A}$ into $\mathcal{B}$ (or more formally, $h^{\#}$ is an embedding from the copy of $\mathcal{A}$ inside $\mathcal{A}^{\ast}$ into the $i^{\text{th}}$ copy of $\mathcal{B}$ inside $\mathcal{B}^{\ast}$, for some $i$). Moreover, $h^{\#}\leq_T h$. Thus, there is no computable embedding from $\mathcal{A}^{\ast}$ into $\mathcal{B}^{\ast}$, and $\mathcal{A}^{\ast} \not\approx_{\Delta^0_1} \mathcal{B}^{\ast}$. In other words, the structures $\mathcal{A}^{\ast}$ and $\mathcal{B}^{\ast}$ witness that $\mathbf{d}$ is not low for bi-embeddings.
\end{proof}

%
%
%
%
%
%
%
\subsection{Case-study: structures that are not b.e.\ trivial}
So far in all of our results we built structures that are bi-embeddably trivial, i.e., their isomorphism type and their bi-embeddability type coincide. Furthermore, it is not hard to see that in all of these examples the degree of categoricity and the degree of bi-embeddable categoricity coincide. But what can we say about structures which are not b.e. trivial? One example that comes to mind is $\eta$, the ordering of the rational numbers. It is well known that $\eta$ is computably categorical. However, it is not hard to see that $\eta$ is not hyperarithmetically b.e.\ categorical and therefore does not have a degree of bi-embeddable categoricity.
\begin{proposition}\label{prop:etanonhyp}
	There is a computably categorical linear ordering which is not hyperarithmetically bi-embeddably categorical.
\end{proposition}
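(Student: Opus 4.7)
First, I will dispatch the computable categoricity claim using Cantor's classical back-and-forth theorem: any two computable dense linear orderings without endpoints admit a computable isomorphism, so $\eta$ is computably categorical.

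For the second assertion, my plan is to produce a single computable bi-embeddable copy $\mathcal H$ of $\eta$ that defeats every hyperarithmetic degree. I will take $\mathcal H$ to be a suitably constructed computable Harrison linear ordering, that is, a computable linear ordering of order type $\omega_1^{CK}\cdot(1+\eta)$. Every countable linear order embeds into $\eta$, giving $\mathcal H \hookrightarrow \eta$, and selecting one representative from each of the densely (in the $1+\eta$ quotient) arranged $\omega_1^{CK}$-blocks of $\mathcal H$ yields $\eta \hookrightarrow \mathcal H$; thus $\mathcal H \biemb \eta$.

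The crux will be to assume, toward a contradiction, that some embedding $f\colon \mathcal H \hookrightarrow \eta$ is hyperarithmetic, and to extract a hyperarithmetic copy of $\omega_1^{CK}$ inside $\mathbb Q$. Using Harrison's construction of $\mathcal H$, I will define by transfinite recursion on notations a computable function $h\colon \mathcal P \to \mathcal H$ such that, for each notation $n$ on our fixed computable path $\mathcal P \subseteq \mathcal O$, $h(n)$ is the element of the first $\omega_1^{CK}$-block of $\mathcal H$ at ordinal position $|n|$; the successor stage uses the computable successor operation in $\mathcal H$, and the limit stage uses the canonical limit point supplied by the recursive construction of the Harrison ordering. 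Since $h$ is order-preserving with image of order type $\omega_1^{CK}$, the composition $f\circ h\colon \mathcal P \to \mathbb Q$ is hyperarithmetic and order-preserving, so its range is a hyperarithmetic subset of $\mathbb Q$ isomorphic to $\omega_1^{CK}$. This contradicts the classical fact (Spector) that $\omega_1^{CK}$ is the least non-hyperarithmetic ordinal, so no such $f$ can exist and $\mathcal H$ witnesses the failure of $\mathbf d$-computable b.e.\ categoricity for every hyperarithmetic $\mathbf d$. The main technical obstacle will be verifying the effective definability of $h$ at limit notations; this is a routine feature of Harrison orderings built by transfinite recursion on notations (cf.\ \cite{ash2000}).
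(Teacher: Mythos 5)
Your opening and your choice of witness are fine: $\eta$ is computably categorical by back-and-forth, and the Harrison ordering $\mathcal H \cong \omega_1^{\mathrm{CK}}\cdot(1+\eta)$ is bi-embeddable with $\eta$. But the core of your argument attacks the wrong direction of the bi-embedding, and no contradiction can be extracted there. For \emph{any} computable linear ordering $\mathcal L$ there is a \emph{computable} embedding of $\mathcal L$ into $\eta$: enumerate the domain of $\mathcal L$ and place each new element at a rational in the correct position relative to the finitely many elements already placed. In particular a computable $f\colon \mathcal H\hookrightarrow\eta$ exists, so assuming such an $f$ is hyperarithmetic cannot lead to absurdity. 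The contradiction you try to derive also fails internally: a path $\mathcal P$ through $\mathcal O$ is not hyperarithmetic (every $\Sigma^1_1$ subset of $\mathcal O$ is bounded), and the initial segment of $\mathcal H$ of order type $\omega_1^{\mathrm{CK}}$ is not a hyperarithmetic subset of $\mathcal H$ --- if your $h$ existed and were hyperarithmetic, its range would already be a hyperarithmetic well-ordering of type $\omega_1^{\mathrm{CK}}$ with no mention of $f$ whatsoever, contradicting Spector. So neither $h$ nor the range of $f\circ h$ is hyperarithmetic, and the final step collapses.

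The direction that genuinely cannot be hyperarithmetic is $\eta\hookrightarrow\mathcal H$, and this is what the paper's proof uses. Fix a computable copy of the Harrison ordering with no hyperarithmetic infinite descending sequence. Any embedding $g\colon\eta\hookrightarrow\mathcal H$ has dense image, hence computes an infinite descending sequence in $\mathcal H$ (for instance $g(1)>_{\mathcal H}g(1/2)>_{\mathcal H}g(1/4)>_{\mathcal H}\cdots$), so $g$ is not hyperarithmetic. Since $\mathbf d$-computable b.e.\ categoricity requires $\mathbf d$-computable embeddings in \emph{both} directions, this single obstruction already shows $\eta$ is not hyperarithmetically bi-embeddably categorical. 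You should rework your second half around this direction; the rest of your setup can stay.
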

\begin{proof}
  That $\eta$ is computably categorical follows easily by a basic back-and-forth argument. Furthermore, it is easy to see that $\eta$ is bi-embeddable with every countable linear ordering which has a dense subordering. Therefore it is bi-embeddable with the Harrison linear ordering $\omega_1^{\mrm{CK}}\cdot(1+\eta)$. Take any embedding from a standard copy of $\eta$ to a computable copy of the Harrison linear ordering that does not have hyperarithmetic descending sequences. Then this embedding computes a descending sequence and thus it can not be hyperarithmetic.
\end{proof}
\begin{proposition}
	 For every computable ordinal $\beta$ there is a proper $\Delta^0_{\beta}$ computably categorical linear ordering which is not hyperarithmetically bi-embeddably categorical.
\end{proposition}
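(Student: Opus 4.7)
My plan is to generalize Proposition~\ref{prop:etanonhyp} by appending an $\eta$-tail to a linear ordering whose proper computable categoricity level sits at $\Delta^0_\beta$. For each computable $\beta$, I would take $\mathcal{L}_\beta$ to be a computable linear ordering that is proper $\Delta^0_\beta$ computably categorical; such orderings exist by Ash's classical hierarchy results, and one can take for instance $\omega^{\gamma}\cdot 2$ when $\beta = 2\gamma$ and $\omega^{\gamma+1} + \omega^{\gamma}$ when $\beta = 2\gamma+1$, exactly paralleling the case split used in \cref{theo:degrees} and provided by the Scott families of \cref{lem:oddordinalsscottfam}. The candidate structure is
\[
\mathcal{M}_\beta := \mathcal{L}_\beta + \eta.
\]

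The first step is to verify that $\mathcal{M}_\beta$ is proper $\Delta^0_\beta$ computably categorical. Given two computable copies $\mathcal{A}, \mathcal{B} \cong \mathcal{M}_\beta$, the $\mathcal{L}_\beta$-part of each copy is its maximal well-ordered initial segment, which can be identified at complexity $\Sigma^0_\beta$ via the $\sim_{\gamma}$ block relation from the paper (elements of the $\mathcal{L}_\beta$-part form the finitely many infinite $\sim_\gamma$-blocks, while elements of the $\eta$-tail are $\sim_\gamma$-singletons). Using the formally $\Sigma^0_\beta$ Scott family for $\mathcal{L}_\beta$ from \cref{lem:oddordinalsscottfam}, I would construct a $\Delta^0_\beta$-isomorphism of the $\mathcal{L}_\beta$-parts and glue it to a computable back-and-forth on the $\eta$-tails. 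Properness at level $\beta$ is inherited: any $\Delta^0_\gamma$-isomorphism of copies of $\mathcal{M}_\beta$ with $\gamma < \beta$ would restrict to one between their $\mathcal{L}_\beta$-parts, contradicting properness of $\mathcal{L}_\beta$.

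For the second step I would exhibit $\mathcal{N}_\beta := \mathcal{L}_\beta + H$, where $H$ is the Harrison linear ordering (a computable copy of $\omega_1^{\mrm{CK}}\cdot (1+\eta)$ with no hyperarithmetic descending sequence). Both $\mathcal{M}_\beta$ and $\mathcal{N}_\beta$ are computable, and they are bi-embeddable since $\eta\hookrightarrow H$ and $H\hookrightarrow \eta$ (the latter because $H$ is countable and $\eta$ is universal for countable linear orderings). For any embedding $f\colon \mathcal{M}_\beta \hookrightarrow \mathcal{N}_\beta$ and any strictly decreasing sequence $q_0 > q_1 > \dots$ in the $\eta$-tail of $\mathcal{M}_\beta$, the images $f(q_0) > f(q_1) > \dots$ form an infinite descending sequence in $\mathcal{N}_\beta$; since $\mathcal{L}_\beta + \omega_1^{\mrm{CK}}$ is well-ordered, this sequence must eventually lie inside the non-well-ordered tail $\omega_1^{\mrm{CK}}\cdot \eta$ of $H$, so $f$ computes an infinite descending sequence inside $H$. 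As $H$ admits no hyperarithmetic descending sequence, $f$ cannot be hyperarithmetic, forcing $\catspb(\mathcal{M}_\beta)$ to contain no hyperarithmetic degree.

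The hardest part will be Step 1: ensuring that identification of the $\mathcal{L}_\beta$-part inside a computable copy of $\mathcal{M}_\beta$ stays within $\Delta^0_\beta$, rather than slipping up a jump because of the $\Pi$-sided ``$\eta$-part'' predicate. This is what forces the parity-sensitive choice of $\mathcal{L}_\beta$, so that the $\sim_\gamma$-block definition of the well-ordered initial segment dovetails with the categoricity level of $\mathcal{L}_\beta$ itself; the rest of the argument is then a minor variation on the $\eta$-case handled in Proposition~\ref{prop:etanonhyp}.
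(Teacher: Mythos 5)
Your second step (bi-embeddability with a Harrison-type order kills hyperarithmetic b.e.\ categoricity) is fine and is essentially the paper's argument via \cref{prop:etanonhyp}. The gap is in Step 1, and it is not the boundary-identification issue you flag but the choice of $\mathcal{L}_\beta$ itself, most seriously at odd levels. By \cref{lem:oddordinalsscottfam} and the lemma following it, $\omega^{\gamma}\cdot 2$ has a formally $\Sigma^0_{2\gamma}$ Scott family and $\omega^{\gamma+1}+\omega^{\gamma}$ has a formally $\Sigma^0_{2\gamma+2}$ one: both sit at \emph{even} levels. In the paper these orders appear only as members of the \emph{pairs} $(\omega^{\gamma},\omega^{\gamma}\cdot 2)$ and $(\omega^{\gamma+1},\omega^{\gamma+1}+\omega^{\gamma})$ used to code $\Sigma^0_{2\gamma+1}$ and $\Sigma^0_{2\gamma+2}$ sets; the individual order $\omega^{\gamma+1}+\omega^{\gamma}$ is not properly $\Delta^0_{2\gamma+1}$ categorical, and Ash's hierarchy gives you no well-order realizing a proper odd categoricity level above $1$. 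So for $\beta=2\gamma+1$ there is no $\mathcal{L}_\beta$ of the kind your construction needs, and ``$\mathcal{L}_\beta+\eta$'' cannot be made to work. This is exactly why the paper's proof splits into two genuinely different cases: for even $\beta=2\alpha$ it uses $\omega^{\alpha}+1+\eta$, but for odd $\beta=2\alpha+1$ it switches to $\mathcal{L}=\omega^{\alpha}\cdot(1+\eta)$, builds a bespoke formally $\Sigma^0_{2\alpha+1}$ Scott family for it, and proves properness by constructing a computable copy in which the $\alpha$-block relation $\sim_{\alpha}$ is $\Sigma^0_{2\alpha}$-complete and another in which it is computable. None of that is recoverable from your template.

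There is also a secondary problem in the even case. The paper uses $\omega^{\alpha}+1+\eta$ rather than $\omega^{\alpha}\cdot 2+\eta$ because the ``$+1$'' supplies a definable (indeed parameter) marker for the boundary between the well-ordered part and the $\eta$-tail; with that parameter every $\eta$-element lies in a single orbit cut out by a quantifier-free condition, and the $\Sigma^0_{2\alpha}$ Scott family goes through. In your $\omega^{\gamma}\cdot 2+\eta$ the natural descriptions of $\eta$-elements over the parameter $c$ (the first point of the second $\omega^{\gamma}$-block) are negations of $\Sigma^0_{2\gamma}$ conditions such as $x\not\sim_{\gamma}c$, so you do not get a formally $\Sigma^0_{2\gamma}$ Scott family, and the claimed $\Delta^0_{\beta}$ upper bound is unproved (and likely false as stated). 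This is repairable by adopting the paper's $\omega^{\alpha}+1+\eta$, but the odd case requires the different structure and the extra lower-bound argument described above.
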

\begin{proof}
	(1) First, assume that the ordinal $\beta$ is even, i.e. $\beta = 2\alpha$. Notice that $\omega^\alpha+1$ has a formally $\Sigma^0_{2\alpha}$ Scott family (see \cref{lem:oddordinalsscottfam}). It furthermore follows from results of Ash~\cite{ash1986} that it does not have a Scott family of less complexity. It is now easy to see that $\omega^\alpha+1+\eta$ has a $\Sigma^0_{2\alpha}$ Scott family with one parameter. Therefore it is relatively $\Delta^0_{2\alpha}$ categorical. However, $\omega^\alpha+1+\eta$ is bi-embeddable with $\eta$ which by \cref{prop:etanonhyp} is not hyperarithmetically bi-embeddably categorical.

	(2) Suppose that $\beta = 2\alpha + 1$, where $\alpha$ is non-zero. We prove that the order $\mathcal{L} = \omega^{\alpha} \cdot (1 + \eta)$ has the desired properties.

	We sketch the description of a formally $\Sigma^0_{2\alpha+1}$ Scott family for $\mathcal{L}$. A typical example of a Scott formula $\psi(\bar x)$ is constructed as follows. Consider a tuple $\bar a = a_0, a_1, a_2, a_3$ from $\mathcal{L}$ such that:
	\begin{itemize}
		\item $a_0$ is the least element of $\mathcal{L}$.

		\item $[a_0;a_1)_{\mathcal{L}} \cong \omega^{\gamma}$, where $\gamma < \alpha$.

		\item $a_2$ and $a_3$ belong to the same copy of $\omega^{\alpha}$ inside $\mathcal{L}$, and this copy is \emph{not} the first. Let $c$ be the least element in the copy.

		\item $[c;a_2)_{\mathcal{L}} \cong \omega^{\delta_0}$ and $[a_2;a_3)_{\mathcal{L}} \cong \omega^{\delta_1}$, where $\delta_0,\delta_1<\alpha$.
	\end{itemize}
	Then the Scott formula $\psi(x_0,x_1,x_2,x_3)$ such that $\mathcal{L}\models \psi(\bar a)$ is defined as a conjunction of the following formulas:
	\begin{itemize}
		\item $x_0 < x_1 < x_2 < x_3$;

		\item $\forall y (x_0 \leq y)$;

		\item a computable $\Pi_{2\gamma+1}$ formula saying that the interval $[x_0;x_1)$ is isomorphic to $\omega^{\gamma}$;

		\item a computable $\Sigma_{2\alpha+1}$ formula postulating the following: there is an element $c$ such that
		\begin{itemize}
			\item $c$ is not the least,

			\item $\forall z<c(z \not\sim_{\alpha} c)$,

			\item $[c;x_2)\cong \omega^{\delta_0}$;
		\end{itemize}

		\item a $\Pi^c_{2\delta_1+1}$-formula saying that  $[x_2;x_3)\cong \omega^{\delta_1}$.
	\end{itemize}
	It is not hard to show that the formula $\psi$ is equivalent to a computable $\Sigma_{2\alpha+1}$ formula. Furthermore, the description of $\psi$ can be easily extended to a construction of a $\Sigma^0_{2\alpha+1}$ Scott family for $\mathcal{L}$.

	The results of Ash and Knight (see Theorem~4.2 and p.~224 in~\cite{ash1990a}) imply that for any $\Sigma^0_{2\alpha}$ set $X$, one can build a uniformly computable sequence $(\mathcal{C}_n)_{n\in\omega}$ of linear orders such that
	\[
		\mathcal{C}_n \cong
		\begin{cases}
			\text{an ordinal } \gamma < \omega^{\alpha} & \text{if } n\in X,\\
			 \omega^{\alpha} \cdot (1 + \eta) & \text{if } n\not\in X.
		\end{cases}
	\]
	The formal details of this construction can be recovered, e.g., from a similar proof of Theorem~3.1 in \cite{Bazh16-ba}. Take $X$ as a $\Sigma^0_{2\alpha}$ complete set, and consider the order
	\[
		\mathcal{M} := \sum_{n\in\omega} (1 + \mathcal{C}_n).
	\]
	Clearly, $\mathcal{M}$ is a computable copy of $\omega^{\alpha}\cdot (1+\eta)$. Moreover, the $\alpha$-block relation $\sim_{\alpha}$ inside $\mathcal{M}$ is a $\Sigma^0_{2\alpha}$ complete set.

	On the other hand, it is easy to build a computable copy $\mathcal{N}$ of $\omega^{\alpha}\cdot (1+\eta)$ such that the relation $\sim_{\alpha}$ for $\mathcal{N}$ is computable. Thus, every isomorphism from $\mathcal{M}$ onto $\mathcal{N}$ must compute a $\Sigma^0_{2\alpha}$ complete set. This implies that the order $\mathcal{L} = \omega^{\alpha}\cdot (1 + \eta)$ is not $\Delta^0_{2\alpha}$ categorical and thus an example of a properly $\Delta^0_{2\alpha+1}$ categorical linear ordering.

	In order to finish the proof, notice that the order $\mathcal{L}$ is bi-embeddable with $\eta$. In turn,  $\eta$ is not hyperarithmetically bi-embeddably categorical.
\end{proof}

\section{Index sets}
In this section we prove results on the complexity of index sets of $\mathbf 0'$ bi-embeddably categorical structures and the index set of structures with degree of bi-embeddable categoricity. The structures we will construct in our proofs belong to the class of strongly locally finite graphs. Recall that a graph is \emph{strongly locally finite} if all of its connected components are finite. It is easy to see that computable strongly locally finite graphs have formally $\Sigma^0_2$ Scott families and are thus $\mathbf 0'$ computably categorical. The following result about strongly locally finite graphs will be used in the following proofs.
\begin{proposition}\label{prop:nonhypslfg}
  There is a strongly locally finite graph that is not hyperarithmetically bi-embeddably categorical.
\end{proposition}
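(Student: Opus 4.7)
My plan is to mirror the argument of \cref{prop:etanonhyp}, replacing the Harrison linear order with a Harrison-style tree. Fix a computable tree $T\subseteq\omega^{<\omega}$ with infinite branches but no hyperarithmetic branch (such $T$ exists in Baire space; note that in Cantor space the low basis theorem precludes this for computable finitely-branching pruned trees). For each $\sigma\in T$, I would construct, uniformly computably in $\sigma$, a finite connected graph $F_\sigma$ that \emph{rigidly} encodes $\sigma$, in the sense that $F_\sigma\hookrightarrow F_\tau$ if and only if $\sigma\preceq\tau$. A ``decorated spine'' construction achieves this: a path of length $|\sigma|$ carrying an asymmetric root-gadget that pins the zeroth spine vertex, together with, at each interior vertex, one of a family of pairwise non-embeddable finite gadgets (e.g.\ odd chordless cycles of pairwise distinct lengths) encoding the corresponding entry of $\sigma$. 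Asymmetry pins the orientation of the spine, and non-embeddability of the entry gadgets forces a term-by-term match.

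Next I would build two computable strongly locally finite graphs $\mathcal G_1 := \bigsqcup_{\sigma\in T} F_\sigma$ and $\mathcal G_2 := \bigsqcup_{\sigma\in T}\bigsqcup_{n\in\omega} F_\sigma$. The embedding $\mathcal G_1\hookrightarrow\mathcal G_2$ is trivial, and the reverse embedding exists by a Hall-type argument on the bipartite compatibility graph $(T\times\omega,T,\preceq)$, using that $T$ has been arranged (by, say, adjoining appropriate finite ``decoy'' subtrees at every node of a computable Harrison tree) so that every $\sigma\in T$ has infinitely many distinct extensions inside $T$.

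The key step is to show that no embedding $f\colon\mathcal G_2\hookrightarrow\mathcal G_1$ can be hyperarithmetic. By the rigidity of the $F_\sigma$, such an $f$ induces, uniformly $f$-computably up to a finite jump (since identifying the component of $\mathcal G_1$ containing a given vertex is $\Sigma^0_2$), an injective function $g\colon T\times\omega\to T$ with $\sigma\preceq g(\sigma,n)$. From $g$ one extracts an infinite branch of $T$ by the recursion $\tau_0:=\emptyset$, $\tau_{i+1}:=g(\tau_i,n_i)$, where $n_i$ is the least $n$ with $g(\tau_i,n)\ne\tau_i$ (such $n$ exists because $g$ is injective on $\{\tau_i\}\times\omega$, so at most one value of $n$ can fix $\tau_i$). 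Then $(\tau_i)_i$ is strictly $\preceq$-increasing, so $\bigcup_i\tau_i$ is a genuine infinite branch of $T$, hyperarithmetic in $f$. A hyperarithmetic $f$ would thus yield a hyperarithmetic branch of $T$, contradicting the choice of $T$.

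The main obstacle is the delicate calibration between Hall's condition for the existence of $\mathcal G_2\hookrightarrow\mathcal G_1$ and the absence of hyperarithmetic branches in $T$: the former pushes $T$ toward being highly branched so that matchings (and hence witnesses of bi-embeddability) exist, while the latter pushes $T$ toward being sparse so that no computable procedure can ``greedily'' find a branch. This tension, analogous to the one in the Harrison-ordering argument of \cref{prop:etanonhyp}, is resolved by working in Baire space with an infinitely-branching Harrison-style core enriched with finite decoy subtrees that inflate branching without introducing new infinite branches, so that Hall's condition holds but any matching implicitly commits to selections that collectively witness an infinite branch of the core tree.
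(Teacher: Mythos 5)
Your finite components $F_\sigma$ and their rigidity analysis ($F_\sigma\hookrightarrow F_\tau$ iff $\sigma\preceq\tau$, with the root gadget forcing distinct source components into distinct target components) are essentially the paper's $C_\sigma$, and extracting a branch from the induced map on components is the right mechanism. The fatal gap is in your choice of the bi-embeddable pair. For $\mathcal G_2=\bigsqcup_{\sigma\in T}\bigsqcup_n F_\sigma$ to embed into $\mathcal G_1=\bigsqcup_{\sigma\in T}F_\sigma$ at all, every $\sigma\in T$ must have infinitely many extensions in $T$: each target component $F_\tau$ is finite and so absorbs at most finitely many (with your root gadget, at most one) of the infinitely many disjoint copies of $F_\sigma$, which therefore require infinitely many distinct $\tau\succeq\sigma$ in $T$. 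But a computable tree in which every node has a proper extension has a \emph{computable} infinite branch: given $\sigma\in T$, search for some $\tau\in T$ with $\tau\supsetneq\sigma$ (the search halts because such $\tau$ exists), pass to the child $\tau\upharpoonright(|\sigma|+1)\in T$, and iterate from the root. So the two properties you impose on $T$ --- no hyperarithmetic branch, and every node infinitely extendible --- are jointly unsatisfiable for a computable tree. The decoy padding does not resolve this: if the duplicated components range over all of the padded tree, the infinitely many copies of $F_\sigma$ for a decoy leaf $\sigma$ have nowhere to go and $\mathcal G_2\not\hookrightarrow\mathcal G_1$; if they range only over the original Harrison-style core, then the greedy map sending the copies of $F_\sigma$ to $\sigma$'s decoy extensions is a \emph{computable} embedding $\mathcal G_2\hookrightarrow\mathcal G_1$, and the pair witnesses nothing. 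Indeed, in your own setup the greedy construction of the injection $g$ is effective, which already contradicts your key step.

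The paper escapes this tension by deleting rather than duplicating: the second copy $\tilde G$ is $G_H$ with a \emph{single} component $C_\sigma$ removed, where $\sigma$ is chosen to lie on an infinite branch of $H$. The embedding $G_H\hookrightarrow\tilde G$ then exists (shift the components along that one branch and fix everything else), but its existence only requires one infinite branch through $\sigma$ --- which exists yet is not hyperarithmetic, so nothing extendibility-related becomes effectively searchable. For the lower bound, any embedding must send $C_\sigma$ into some $C_{\sigma_1}$ with $\sigma\subsetneq\sigma_1$ (since $C_\sigma$ is absent from $\tilde G$), and a disjointness-of-images cardinality argument forces the chain of components to keep climbing strictly under iteration, tracing out an infinite branch computable from the embedding. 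If you want to salvage your write-up, replace the multiplicity construction by this deletion construction; your gadgets and your branch-extraction argument then go through essentially verbatim.
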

\begin{proof}
Let $H\subseteq \omega^{<\omega}$ be a computable tree without hyperarithmetic paths. We build a strongly locally finite graph $G_H$ such that the partial ordering under embeddability of its components is computably isomorphic to $H$.

For any $\sigma\in H$, $G_H$ contains the component $C_\sigma$: A ray of length $|\sigma|+1$ where the first vertex has a loop connected to it and the $(i+2)^{th}$ vertex for $i< |\sigma|$ has a cycle of length $\sigma(i)+2$ attached. Clearly the partial ordering of the components is computably isomorphic to $H$ by $C_\sigma \mapsto \sigma$. Now $G_H$ has a bi-embeddable copy $\tilde G$ that skips a fixed $C_\sigma$ such that $\sigma$ lies on a path in $H$. Now consider embeddings $\mu: G_H \rightarrow G$ and $\nu: G\rightarrow G_H$, then $C_\sigma \subset \mu(C_\sigma) \subset \nu(\mu(C_\sigma))\subset \dots$ and thus there is $f\in [H]$ hyperarithmetic in $\mu\oplus \nu $. Hence, $\mu \oplus \nu$ itself can not be hyperarithmetic.
\end{proof}
\begin{theorem}\label{theo:indexset0'}
The index set of $\mathbf{0}'$-computably bi-embeddably categorical structures is $\Pi^1_1$-complete.
\end{theorem}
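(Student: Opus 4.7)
\emph{Plan.} I begin with the upper bound. Unfolding the definition, $\mathcal{A}_e$ is $\mathbf{0}'$-computably b.e.\ categorical iff
\[
\forall i\,\bigl[\mathcal{A}_i \approx \mathcal{A}_e \rightarrow \exists j\,\exists k\,\bigl(\varphi_j^{\emptyset'}\colon\mathcal{A}_i\hookrightarrow\mathcal{A}_e\ \wedge\ \varphi_k^{\emptyset'}\colon\mathcal{A}_e\hookrightarrow\mathcal{A}_i\bigr)\bigr].
\]
The hypothesis is $\Sigma^1_1$, whereas the conclusion is arithmetical (totality plus the embedding clauses for each of $\varphi_j^{\emptyset'}, \varphi_k^{\emptyset'}$ is a $\Pi^0_3$ condition on $j$ and $k$). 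An implication with $\Sigma^1_1$ hypothesis and arithmetical conclusion is $\Pi^1_1$, and the outer universal quantifier over $i$ preserves $\Pi^1_1$, so the index set lies in $\Pi^1_1$.

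For $\Pi^1_1$-hardness I plan to reduce from $\mathrm{WF}=\{e : T_e \text{ is a well-founded computable tree in }\omega^{<\omega}\}$, which is $\Pi^1_1$-complete. Given a computable tree $T$, I would build, uniformly in its index, a computable strongly locally finite graph $G_T$ that refines the construction of \cref{prop:nonhypslfg}: for every $\sigma\in T$ it contains one component $C_\sigma$ coding $\sigma$ so that $C_\sigma\hookrightarrow C_\tau$ iff $\sigma\preceq\tau$, together with rigidifying features (e.g.\ distinguishing gadgets attached to every vertex of each component) forcing any finite connected graph bi-embeddable with some $C_\sigma$ to be isomorphic to some $C_\tau$. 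Since every computable strongly locally finite graph admits a formally $\Sigma^0_2$ Scott family, $G_T$ is automatically $\mathbf{0}'$-computably categorical.

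The reduction then rests on two complementary claims. If $T\notin\mathrm{WF}$, the argument of \cref{prop:nonhypslfg} produces a computable $\widetilde{G}\approx G_T$ such that every pair of embeddings between $G_T$ and $\widetilde{G}$ computes an infinite path through $T$; since $T$ is computable, no such path is hyperarithmetic and hence certainly none is $\mathbf{0}'$-computable. If $T\in\mathrm{WF}$, I claim that $G_T$ is b.e.\ trivial, after which the $\mathbf{0}'$-computable categoricity of $G_T$ supplies the required embeddings. This second claim is the main obstacle: in the bare construction of \cref{prop:nonhypslfg}, a bi-embeddable copy of $G_T$ could carry spurious components---proper connected subgraphs of some $C_\sigma$ (for instance an unattached cycle of length $\sigma(i)+2$)---so the rigidifying features are essential. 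With them in place, a transfinite induction on the rank of $T$ forces any computable $\widehat{G}\approx G_T$ to consist of exactly one copy of each $C_\sigma$ and nothing else, whence $\widehat{G}\cong G_T$. The map $T\mapsto G_T$ is then a computable many-one reduction of $\mathrm{WF}$ to the index set, establishing $\Pi^1_1$-hardness.
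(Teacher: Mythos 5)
Your upper-bound computation is fine (the paper leaves it implicit), and your overall strategy---code trees into strongly locally finite graphs, use the formally $\Sigma^0_2$ Scott family for the positive direction, and get hardness from well-foundedness---is the paper's. But there is a genuine gap in the hardness direction. You reduce from the set $\mathrm{WF}$ of well-founded computable trees and, in the ill-founded case, argue that any pair of embeddings between $G_T$ and the shifted copy $\widetilde{G}$ computes an infinite path through $T$, and that ``since $T$ is computable, no such path is hyperarithmetic.'' That inference is false: an ill-founded computable tree can perfectly well have a computable path (e.g.\ $\{0^n : n\in\omega\}$ is computable, ill-founded, and has a computable path). For such a $T$ the path-computation argument of \cref{prop:nonhypslfg} gives no lower bound at all---indeed one can typically shift components along the computable path to produce computable embeddings---so nothing prevents $G_T$ from being $\mathbf{0}'$-computably b.e.\ categorical even though $T\notin\mathrm{WF}$. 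Your reduction therefore fails on exactly those instances, and the map $T\mapsto G_T$ is not a many-one reduction of $\mathrm{WF}$ to the index set.

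The missing idea, which is what the paper supplies, is to preprocess the input tree so that ill-foundedness forces every path to be non-hyperarithmetic. Fix one computable tree $H$ with no hyperarithmetic paths and, given the uniformly computable sequence $(T_i)$ with $T_i$ well-founded iff $i\in\mathcal{O}$, form the interleaved tree $S_i=\{\xi : \xi\subseteq\sigma\star\tau,\ |\sigma|=|\tau|,\ \sigma\in T_i,\ \tau\in H\}$. Then $S_i$ is well-founded iff $T_i$ is, and any infinite path through $S_i$ computes one through $H$, hence is non-hyperarithmetic and in particular not $\mathbf{0}'$-computable; with $S_i$ in place of $T$, your ill-founded-case argument goes through. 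Your remarks about rigidifying the components $C_\sigma$ to rule out spurious components in the well-founded case address a point the paper asserts without elaboration and are welcome, but they do not repair the hardness direction.
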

\begin{proof}
Let $H$ be a computable tree without hyperarithmetic paths as in the proof of \cref{prop:nonhypslfg} and let $(T_i)_{i\in \omega}$ be a uniformly computable sequence of trees such that $T_i$ is well-founded iff $i\in \mathcal O$. For two strings $\sigma$, $\tau$ of the same length let $\sigma\star\tau=\sigma_0\tau_0\sigma_1\tau_1\dots \sigma_{|\sigma|-1}\tau_{|\tau|-1}$, and consider the sequence of trees $(S_i)_{i\in \omega}$
\[ S_i=\{ \xi : \xi \subseteq \sigma\star\tau,\ |\sigma|=|\tau|,\ \sigma\in T_i,\ \tau\in H\}.\]
Clearly, it is uniformly computable, and $S_i$ is well-founded iff $i\in \mathcal O$. Furthermore, no path in $[S_i]$ is hyperarithmetic. Using the same coding as in the proof of \cref{prop:nonhypslfg} we get that if $i\in \mathcal{O}$, then $G_{S_i}$ is b.e.\ trivial and thus $\mathbf 0'$-computably bi-embeddably categorical. If $i\not \in \mathcal O$, then $G_{S_i}$ is not $\mathbf 0^{(\alpha)}$-computably bi-embeddably categorical for $\alpha<\omega_1^{\mathrm{CK}}$.

\end{proof}

Note that in~\cite{downey2015a}, it was shown that the index set of computably categorical structures is $\Pi^1_1$-complete. We leave open whether a similar result can be obtained for computably bi-embeddably categorical structures.

\begin{cor}
	The index set of structures with degree of bi-embeddable categoricity is $\Pi^1_1$ complete.
\end{cor}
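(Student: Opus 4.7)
My plan is to lift the reduction from Theorem~\ref{theo:indexset0'} to this stronger statement, with a small sharpening of what we know about $G_{S_i}$ in the positive case. First, I would verify that for $i\in\mathcal O$ the graph $G_{S_i}$ is actually \emph{computably categorical}, not merely $\mathbf 0'$-computably bi-embeddably categorical. For such $i$ the coding strings $\sigma\in S_i$ are finite, the components $C_\sigma$ are finite and pairwise non-isomorphic, and from any vertex $v$ one can computably extract the component containing $v$ and hence its coding string $\sigma$. Given two computable copies $G, H$ of $G_{S_i}$, one therefore searches $H$ for the unique component isomorphic to $C_\sigma$ (the search terminates because $H\cong G_{S_i}$) and maps $C_\sigma^{G}$ to $C_\sigma^{H}$ via the canonical finite isomorphism. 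Gluing these component-wise maps yields a computable isomorphism $G\to H$. Since $G_{S_i}$ is b.e.\ trivial, this forces $\mathbf 0$ to be its degree of bi-embeddable categoricity whenever $i\in\mathcal O$.

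For the negative direction, when $i\notin\mathcal O$ the tree $S_i$ has no hyperarithmetic path; the argument of Proposition~\ref{prop:nonhypslfg} adapted in Theorem~\ref{theo:indexset0'} shows that any pair of embeddings between $G_{S_i}$ and a suitable bi-embeddable copy must compute such a path, so $G_{S_i}$ is not hyperarithmetically bi-embeddably categorical. By Theorem~\ref{theo:degreesarehyp} any degree of bi-embeddable categoricity has to be hyperarithmetic; hence $G_{S_i}$ has no degree of b.e.\ categoricity at all. Combining the two directions, $i\mapsto G_{S_i}$ is a computable many-one reduction from the $\Pi^1_1$-complete set $\mathcal O$ into the index set under consideration, which gives $\Pi^1_1$-hardness.

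For the matching $\Pi^1_1$ upper bound, I would unwind the definition: $\mathcal S$ has a degree of b.e.\ categoricity iff some hyperarithmetic $\mathbf d$ both lies in $\catspb(\mathcal S)$ and is Turing-below every other element of $\catspb(\mathcal S)$. Theorem~\ref{theo:degreesarehyp} restricts the existential over $\mathbf d$ to the hyperarithmetic range, and ``$\mathcal S$ is $\mathbf d$-computably b.e.\ categorical'' is $\Pi^1_1$ in $\mathbf d$ (by the standard unwinding $\forall \mathcal A[\mathcal A\approx\mathcal S\to\exists f,g\leq_T\mathbf d\text{ embeddings}]$). Replacing the minimality clause by the requirement that $\mathbf d$ be uniformly reducible to any pair of embeddings witnessing membership in the spectrum, a quantifier analysis analogous to the one implicit in the upper bound of Theorem~\ref{theo:indexset0'} places the index set in $\Pi^1_1$. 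I expect the main obstacle to be precisely this careful quantifier bookkeeping for the upper bound; the hardness portion is essentially immediate once the computable categoricity of $G_{S_i}$ for $i\in\mathcal O$ is noted.
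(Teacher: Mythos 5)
There is a genuine gap in the positive direction of your reduction. Your plan stands or falls with the claim that $G_{S_i}$ is computably categorical for $i\in\mathcal O$, and the step you use to justify it --- ``from any vertex $v$ one can computably extract the component containing $v$'' --- is false for an arbitrary computable copy of a strongly locally finite graph: the edge relation is computable, but a vertex has no computable bound on its (finitely many) neighbours, so determining the full component, or recognizing that a candidate component of $H$ is exactly $C_\sigma$ rather than a still-growing $C_\tau$ with $\sigma\subsetneq\tau$, is a $\Sigma^0_2$ question. This is precisely why the paper only claims a formally $\Sigma^0_2$ Scott family and hence $\mathbf 0'$-categoricity for such graphs. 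Indeed, since $S_i$ contains many pairs $\sigma\subsetneq\tau$ and $C_\sigma$ embeds into $C_\tau$, a standard ``grow the component after the opponent commits'' diagonalization defeats computable categoricity, so your sharpening is not merely unproved but almost certainly false. Note also that your premise (``for such $i$ the coding strings are finite, the components are finite'') holds for \emph{every} $i$, so it cannot be what distinguishes the well-founded case. Without computable categoricity, knowing only that $\mathbf 0'\in\catspb(G_{S_i})$ does not give a least element of the spectrum, so for $i\in\mathcal O$ you have not shown that $G_{S_i}$ has a degree of bi-embeddable categoricity at all, and the many-one reduction breaks.

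The paper's proof supplies exactly the missing ingredient: instead of using $G_{S_i}$ alone, it forms a cardinal sum $\mathcal S_i$ of $G_{S_i}$ with the canonical unbounded equivalence structure $\mathcal E$, which is known to have degree of bi-embeddable categoricity exactly $\mathbf 0'$. Every (bi-)embedding of $\mathcal S_i$ restricts to one of the $\mathcal E$-part, so in the positive case the spectrum of $\mathcal S_i$ is pinned down to have least element $\mathbf 0'$, while in the negative case the $G_{S_i}$-part together with \cref{theo:degreesarehyp} still kills any degree of b.e.\ categoricity. Your negative direction and the general shape of the $\Pi^1_1$ upper bound are in line with the paper (which itself leaves the membership bookkeeping implicit), but you need either a correct proof that $G_{S_i}$ is computably b.e.\ categorical for $i\in\mathcal O$ --- which I do not believe is available --- or an auxiliary component such as $\mathcal E$ to anchor the least degree.
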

\begin{proof}
  In the proof of \cref{theo:indexset0'} we produced a uniformly computable sequence of structures $(G_{S_i})_{i\in\omega}$ such that in the $\Pi_1^1$ outcome $G_{S_i}$ is $\mathbf 0'$-computably bi-embeddably categorical. To obtain the corollary we take the cardinal sum of $G_{S_i}$ and a structure which has degree of bi-embeddable categoricity $\mathbf 0'$. More formally, the new structure $\mathcal S_i$ is in the language of graphs with an additional relation symbol $R/1$ such that $R$ partitions $\omega$ into two infinite sets. We let $R(S_i)\cong G_{S_i}$, and we let the corelation of $R$ be isomorphic to the canonical unbounded equivalence structure -- the equivalence structure $\mathcal E$ having one equivalence class of each size -- i.e., the universe of $\mathcal E$ is $\{ \langle i, n\rangle: n<i\}$ and its equivalence relation (edge relation) is defined by $\langle i_1,n_1\rangle E\langle i_1,n_2\rangle\LR i_1=i_2$.

  The structure $\mathcal E$ has degree of b.e.\ categoricity $\mathbf 0'$~\cite[Theorem 3.8]{bazhenov2018a}. Now, every embedding of $\mathcal S_i$ into a bi-embeddable copy computes an embedding between $\neg R(S_i)$ and a bi-embeddable copy of $\mathcal E$. If $i\in \mathcal O$, then between any two bi-embeddable copies of $\mathcal S_i$ there are $\mathbf 0'$ computable embeddings and there are bi-embeddable copies $\A$ and $\B$ such that $\mathbf 0'$ is the least degree computing such embeddings. Thus $\mathcal S_i$ has degree of b.e. categoricity $\mathbf 0'$. On the other hand, if $i\not\in \mathcal O$, then $R(S_i)$ is not hyperarithmetically bi-embeddably categorical, and has by \cref{theo:degreesarehyp} no degree of b.e. categoricity.
\end{proof}
\printbibliography
\end{document}